\newtheorem{thm}{Theorem}[section]
\theoremstyle{definition}
\theoremstyle{remark}
\numberwithin{equation}{section}
\newcommand{\card}[1]{\ensuremath{\left\|#1\right\|}}
\newtheorem{lemma}{Lemma}[section]
\newcommand\pfun{\mathrel{\ooalign{\hfil$\mapstochar\mkern5mu$\hfil\cr$\to$\cr}}}
\begin{document}
\title[Persistent homology using iterated Morse decomposition]{Computing homology and persistent homology using iterated Morse decomposition}

\author{Pawe\l\ D\l otko}
\address{Institute of Computer Science, Jagiellonian University, Krakow, Poland}
\email{pawel.dlotko@ii.uj.edu.pl}

\author{Hubert Wagner}
\address{Institute of Computer Science, Jagiellonian University, Krakow, Poland}
\email{hubert.wagner@ii.uj.edu.pl}

\begin{abstract}
In this paper we present a new approach to computing homology (with field coefficients) and persistent homology. We use concepts from discrete Morse theory, to provide an algorithm which can be expressed solely in terms of simple graph theoretical operations. We use \emph{iterated Morse decomposition}, which allows us to sidetrack many problems related to the standard discrete Morse theory. In particular, this approach is provably correct in any dimension.
\end{abstract}

\maketitle
\section{Preview.}
\label{sec:preview}

In this section we outline the purpose of the paper, assuming reader's familiarity with some concepts from computational topology. All the concepts will be carefully explained later. In this paper we introduce a new method to compute homology and persistent homology over field coefficients. The method is based on discrete Morse theory and is designed to be graph-theoretic. 

We present a brief, intuitive illustration of our method. As an example, let us consider a triangulation of a Dunce hat presented in Figure~\ref{fig:dunceHat}.a. We want to remind that this space has trivial homology but nontrivial topology. We will use discrete Morse theory to simplify the space, while preserving the homology. First, let us build a discrete Morse matching on this triangulation. It is well known that a Dunce hat has no perfect\footnote{A Morse complex is \emph{perfect} if each critical cell corresponds to exactly one homology generator.} Morse complex~\cite{ayala}. Therefore, for any Morse matching we obtain some critical cells not corresponding to homology generators. The matching presented in Figure~\ref{fig:dunceHat}.b is optimal i.e. there are as few critical cells as possible.  Now the Morse boundary is computed using the V-paths marked in Figure~\ref{fig:dunceHat}.c. The resulting Morse complex (with $\mathbb{Z}_2$ coefficients) is shown in Figure~\ref{fig:dunceHat}.d. Normally, in order to compute homology of a chain complex, boundary matrix is produced and Smith Normal Form diagonalization is performed. However we would like to introduce an alternative approach. 

Using Kozlov's version of discrete Morse theory, we can iterate the Morse complex construction. In other words, we build a Morse complex of a Morse complex etc. So, here we compute a Morse matching of a chain complex presented in Figure~\ref{fig:dunceHat}.d. The only matched pair is marked with an arrow. Once the (iterated) Morse complex is computed, we are left with a single 0-dimensional cell. It cannot be paired anymore and corresponds to the only homological feature: the connected component. This way we have computed the homology of the Dunce hat.

\begin{figure}[!h]
\centerline{\includegraphics[scale=0.5]{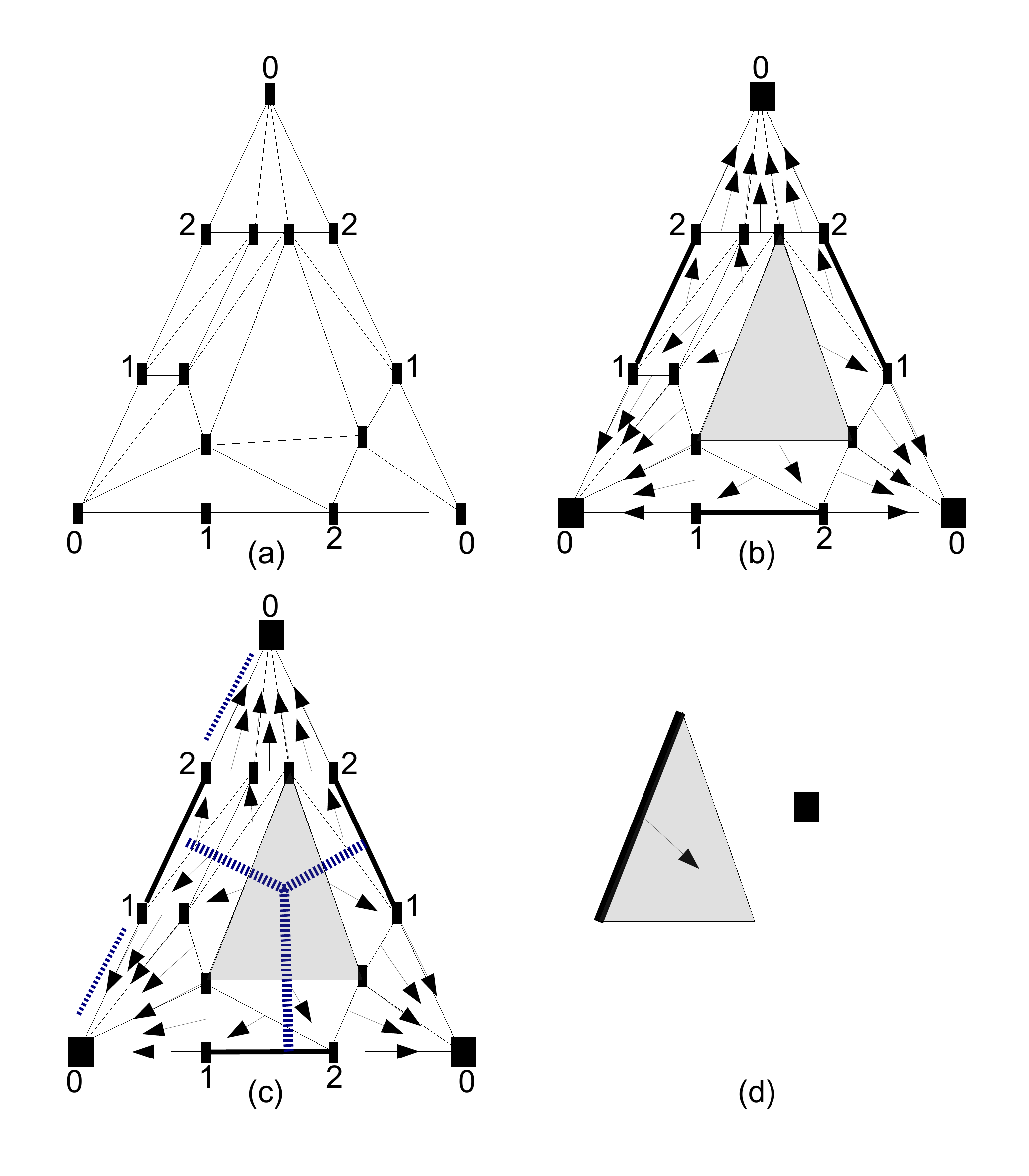}}
\caption
{
\emph{Iterated} version of Morse complex construction on a Dunce hat. On the top, the $0$ and $1$ dimensional critical cells are marked with bold, the middle gray triangle is the unique critical 2-cell. In the bottom left picture the V-paths used to compute the Morse complex after the first iteration of the construction. On the bottom right, the second (and final) iteration of Morse complex construction. The remaining vertex corresponds to the unique homology generator in dimension $0$.
}
\label{fig:dunceHat}
\end{figure}

Later the presented technique will be referred to as \emph{iterated Morse complex} construction or \emph{iterated Morse decomposition}. In this paper we will show that with the presented technique one can always acquire homology with field coefficients. We will also generalize it to the setting of persistent homology. As a result, we introduce a novel way to compute homology and persistence over a field. 
\section{Motivation}

While persistent homology can potentially be applied to a plethora of different practical problems, ranging from sensor networks~\cite{sensors} to root architecture analysis~\cite{herbert}, performance tends to be a problem. In particular, there is growing interest in analysis of high-dimensional topological features (going beyond connected components and 1-cycles). In low dimensions there exist efficient algorithms, but higher dimensional cases are still challenging.

Such applications include analysis of complex networks such as social-network and biological networks such as gene-regulatory networks. Computing homology or persistent homology of maps between spaces leads to high dimensional datasets as the studied space is the Cartesian product of the source and target space~\cite{harker}.

The only class of algorithms to compute persistent homology in the general case is based on matrix reductions. Such an approach was recently shown to expose roughly quadratic computational complexity, for data coming from certain practical applications~\cite{ctic}. Also, it is not very suitable for distributed computing, which is a necessity as the datasets grow larger.

Our aim is to propose an algorithmic framework which would scale reasonably well as the size of data (and its dimension) grows. In lower dimensions several algorithms were proposed and their efficiency stemmed from using fast techniques from graph-theory. Prompted by this observation, we wanted to propose a similar approach which would work for any dimension.

However, directly extending the existing approaches to higher dimensions is (as we believe) impossible. Combinatorial techniques, for instance described in~\cite{de, vanessa},  crucially depend on discrete Morse theory. In particular, a perfect Morse complex is (implicitly) constructed in most cases, which allows to read homology information right away. Computing such perfect Morse complexes is hard (or even impossible) in higher dimensions.

In case of field coefficients, which are important for practical applications, the situation is more tractable. We use the properties of \emph{iterated} Morse complexes \cite{ctic}, which always exist and are easy to compute. Additionally, we build on top of a recent theoretical framework by Mischaikow and Nanda \cite{mischaikow}, which extends Morse theory to filtrations of spaces.

The following paper describes the theory and algorithms for computing homology and persistent homology using \emph{iterated} Morse decomposition. We prove that the algorithm is correct for chain-complexes of any dimension. This includes commonly used simplicial and cubical complexes. Further, we show that, just as we intended, the algorithms can be entirely expressed in terms of basic graph-theoretical techniques. It promises that in terms of implementations, using efficient graph libraries~\cite{graph_lib} will result in a scalable solution.

The paper is structured as follows. Section~\ref{sec:previousWork} is a survey of existing work in the topic. In Section~\ref{sec:background} an introduction to homology and persistent homology is given, together with the necessary algorithmic background. In Section~\ref{sec:dmt} a Discrete Morse Theory is highlighted. In Section~\ref{sec:IteraterMorseComplex} the concept of \emph{iterated} Morse complex, crucial for this paper, is introduced. It is also explained there how this concept is used to compute homology. In Section~\ref{sec:iteratedMorseComplexForPersistence} these results are extended to simplification of filtered complexes, which is a preprocessing step for computing persistence. Later in Section~\ref{sec:persViaMorse} it is explained how to compute persistence using solely \emph{iterated} Morse complex. Finally in Section~\ref{sec:conclusions} conclusions are drawn.

\section{Previous work.}
\label{sec:previousWork}

Computations of homology and persistent homology is a well established area of research with a rich history. In this section we want to summarize the main contributions and historical landmarks in this subject. 

The classical way of computing homology is by using Smith Normal Form (SNF) of a boundary operator matrix, see~\cite{munkres}. The classical algorithm has hyper-cubical complexity (in case of integer coefficients), see~\cite{storjohann}. It is however possible to perform SNF in cubic time when field coefficients are considered. 

In the nineties Delfinado and Edelsbrunner provided an incremental algorithm for Betti numbers computation~\cite{de}. This algorithm exhibits linear time complexity and works for sub-triangulations of 3 dimensional sphere. There are also algorithms to compute homology with chain contractions and so called AT-models~\cite{rocio}.

In 2003, after a few earlier iterations (\cite{wlosi, wlosi1, vanessaPhd}), persistent homology was introduced in its contemporary form~\cite{firstPers}. In the same paper, a matrix-reduction algorithm to compute persistence (Algorithm~\ref{alg:reduction}) was given. This is what is considered the standard algorithm to compute persistence. For a comprehensive presentation the reader should consult~\cite{herbert}. Algorithmic results are further discussed in Section~\ref{sec:background}.

Discrete Morse theory (DMT) was introduced by Robin Forman~\cite{forman}. Later a more general, algebraic version was developed~\cite{kozlov}. The comprehensive presentation of algebraic discrete Morse theory can be found in~\cite{kozlov}. The idea of using DMT for homology computations has been introduced by Lewiner~\cite{lewinerHomology}. The complexity aspects of DMT has been discussed in~\cite{joswig}. The notions of $F-$perfect and $F-$optimal Morse complexes are discussed in~\cite{ayala}. A simplification algorithm for a Morse complexes on 2-manifolds has been presented in~\cite{wadzewski}. A divide and conquer algorithm to compute Morse complexes has been presented in~\cite{pascucci}.

Recently Robins, Wood and Sheppard have provided a practical link between discrete Morse theory and persistence~\cite{vanessa}. In this paper they introduce an optimal simplification scheme for persistence in case 3-dimensional complexes. The optimality of the presented result is restricted to 3 dimensions due to some deep results from simple homotopy theory. An extension of Morse theory suitable for simplifying filtered chain complexes in any dimension was later provided by Mischaikow and Nanda~\cite{mischaikow}. In short, by performing Morse matchings independently for each filtration level, a simplified filtered Morse complex is obtained. 

The idea of iterating Morse complex construction has already been used in~\cite{ctic, harker} as a tool to decrease the size of complexes before standard algebraic computations. 

There are many software libraries to compute homology and persistent homology. For a homology software the reader should consult~\cite{capd, chomp, kenzo}. For persistent homology~\cite{jplex, dionizous, perseus} are recommended.

\section{Background}
\label{sec:background}

\subsection{Complexes}
We assume that the input data is represented as a \emph{chain complex} with field coefficients. In the most typical case, this chain complex comes from a CW-decomposition of a given space which is a decomposition of a space into \emph{cells} of different dimensions. In practice, simplicial and cubical complexes are used. For simplicity, we will use $\mathbb{Z}_2$ coefficients throughout the paper, as this is the standard setting for persistence. However, we want to remark that the presented algorithms work for any field coefficients.

\subsection{Boundary maps}\quad
Let us fix a complex $\mathcal{K}$. Cells of $\mathcal{K}$ have different dimensions and are connected by boundary relations. If a $(p-1)$-cell $a$ has non-zero boundary coefficient with a $p$-cell $B$, we say $a$ is a proper face of $B$, and $B$ is a proper coface of $a$. (Notation: capital letters denote higher dimensional cell where a cell and its face is considered). 
Let a \emph{$p$-chain} be a formal sum of $p$-cells with the $\mathbb{Z}_2$ coefficients. The boundary operator $\partial_p$ maps $p$-chains into $p-1$-dimensional boundary chains. The chain of (co-)faces is called a (co-)boundary. We can extend the boundary operator linearly to $p$-chains. For any $p$-chain $c = \sum a_{i} c_{i}$, we have $\partial_{p}c = \sum a_i \partial_{p}c_{i}$. It is assumed that the boundary of a boundary is zero, or formally: $\partial_{p}\partial_{p+1} = 0$. The $p$-chains, together with addition modulo 2, form a \emph{group of $p$-chains}, denoted by $C_p$. 

The boundary operator $\partial_{p}$ can be written as a binary matrix (also denoted $\partial_p$), whose columns represent the boundaries and rows represent coboundaries of cells.

% The above is summarized by the \emph{chain complex}, which can be viewed as an \emph{algebraic representation} of a complex $C$~\cite{Forman1998b}

% \begin{equation}
% C: C_3\xrightarrow{\partial_3} C_2\xrightarrow{\partial_2}C_1\xrightarrow{\partial_1} C_0.
% \label{eq:cellComplex}
% \end{equation}

\subsection{Standard homology}
Intuitively, homology can be used to capture holes of complex $\mathcal{K}$. In 3-dimensional case holes are: connected components, tunnels, and voids. To define it formally, let us first introduce the group of $p$-cycles, $Z_p(\mathcal{K}) = ker \partial_p $ \and its subgroup: the group of $p$-boundaries, $B_p(\mathcal{K}) = im \partial_{p+1}$. The $p$-th homology group is the quotient $H_p = Z_p(\mathcal{K})/B_p(\mathcal{K})$. The $p$-th Betti number, denoted by $\beta_p$, is the rank of this group and counts the number of $p$-dimensional holes.

\subsection{Filtrations and persistence}
\label{sec:introToPersistence}
For a given complex $\mathcal{K}$, a filtration is defined as a nested sequence of its subcomplexes: $\emptyset = \mathcal{K}_{-1} \subseteq \mathcal{K}_{0} \subseteq \mathcal{K}_{1} \subseteq \ldots \subseteq \mathcal{K}_n=\mathcal{K}$~\cite{herbert}. In case of persistence, filtrations are often generated by a \emph{filtering function}, $g : \mathcal{K} \rightarrow \mathbb{Z}$ defined on the input complex. We require that $g(a) \leq g(B)$ whenever $a$ is a face of $B$. This property guarantees that the sub-level sets $\mathcal{K}_t=g^{-1}(-\infty,t]$ are sub-complexes of $\mathcal{K}$ for each value of $t \in \mathbb{Z}$. The inclusions from $\mathcal{K}_i$ to $\mathcal{K}_j$, for $i \leq j$ induce homomorphisms, $f^{i,j}: H(\mathcal{K}_i) \rightarrow  H(\mathcal{K}_j)$. Complex $\mathcal{K}$ with filtration will be refered to as \emph{filtered complex}. 

Given a complex $\mathcal{K}$ and a filtering function $g:\mathcal{K}\rightarrow \mathbb{Z}$, \emph{persistent} homology studies homological changes of the sub-level complexes, $\mathcal{K}_t=g^{-1}(-\infty,t]$.
Persistent homology captures the birth and death times of homology classes of the sub-level complexes, as $t$ grows from $-\infty$ to $+\infty$.
By birth, we mean that a homology feature is created; by death, we mean it either becomes trivial or becomes identical to some other class born earlier. 
The \emph{persistence}, or lifetime of a class, is the difference between the death and birth times. Often a multiset of \emph{persistence intervals} is used to represent persistence. An interval encodes a lifetime of a homology class of a given dimension. We say that two spaces have the same persistence, if their corresponding persistence intervals are the same.
%Homology classes with larger persistence reveal information about the global structure of the space $\mathcal{K}$, described by the function $g$.

The formal definition is as follows (after~\cite{herbert}): The $p$-th persistent homology groups of filtered complex $\mathcal{K}$ are the images of the homomorphisms induced by inclusion, $H^{i,j}(\mathcal{K}) = im f^{i,j}$. For a standard definition of persistence diagram and persistence intervals the reader should consult~\cite{herbert}.

We want to remind a theorem saying when persistence of two filtered complexes are equal:
\begin{thm}[Persistence equivalence theorem,~\cite{herbert}]
\label{thm:pet}
Consider persistent homology of two filtered complexes $X$ and $Y$. Let $\phi_i : H_{*}(X_i) \rightarrow H_{*}(Y_i)$:
$$
\begin{CD}
  H_{*}(X_0) @>>> H_{*}(X_1) @>>> \ldots @>>> H_{*}(X_{n-1}) @>>> H_{*}(X_{n}) \\
  @V\phi_0VV @V\phi_1VV @. @V\phi_{n-1}VV @V\phi_nVV\\
  H_{*}(Y_0) @>>> H_{*}(Y_1) @>>> \ldots @>>> H_{*}(Y_{n-1}) @>>> H_{*}(Y_n)\\
\end{CD}
$$

If the $\phi_i$ are isomorphisms and all the squares commute, then the persistence diagrams of $X$ and $Y$ are the same.
\end{thm}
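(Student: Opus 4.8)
\section*{Proof proposal}

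The plan is to reduce the statement to an equality of persistent Betti numbers and then to perform a short diagram chase. Recall that the persistence diagram of a filtered complex is completely determined by the collection of ranks $\beta^{i,j} = \mathrm{rank}\, H^{i,j}$ of its persistent homology groups: the multiplicity of the point $(i,j)$ equals $(\beta^{i,j-1} - \beta^{i,j}) - (\beta^{i-1,j-1} - \beta^{i-1,j})$, by the standard inclusion--exclusion (M\"obius inversion) argument for persistence modules over a field. Hence it suffices to prove that $\mathrm{rank}\, H^{i,j}(X) = \mathrm{rank}\, H^{i,j}(Y)$ for every pair $i \leq j$.

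First I would note that, although the hypothesis only asserts commutativity of the squares joining consecutive levels, composing adjacent squares shows that the square
$$
\begin{CD}
H_{*}(X_i) @>{f^{i,j}_X}>> H_{*}(X_j) \\
@V{\phi_i}VV @V{\phi_j}VV \\
H_{*}(Y_i) @>{f^{i,j}_Y}>> H_{*}(Y_j)
\end{CD}
$$
commutes for all $i \leq j$, where $f^{i,j}_X$ and $f^{i,j}_Y$ denote the maps induced by inclusion in $X$ and $Y$ respectively.

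Next, fix $i \leq j$ and restrict $\phi_j$ to $H^{i,j}(X) = \mathrm{im}\, f^{i,j}_X \subseteq H_{*}(X_j)$. Commutativity gives $\phi_j \circ f^{i,j}_X = f^{i,j}_Y \circ \phi_i$, so
$$
\phi_j\bigl(\mathrm{im}\, f^{i,j}_X\bigr) = \mathrm{im}\bigl(\phi_j \circ f^{i,j}_X\bigr) = \mathrm{im}\bigl(f^{i,j}_Y \circ \phi_i\bigr) = \mathrm{im}\, f^{i,j}_Y = H^{i,j}(Y),
$$
where the penultimate equality uses that $\phi_i$ is surjective. Since $\phi_j$ is injective, its restriction to $H^{i,j}(X)$ is an isomorphism onto $H^{i,j}(Y)$; in particular the two groups have the same rank. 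Combined with the multiplicity formula above, this shows that $X$ and $Y$ have identical persistence diagrams.

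The argument is essentially a diagram chase, so I do not expect a serious obstacle; the only point requiring genuine care is the first reduction, namely invoking the classification of persistence modules over a field so that equality of all persistent Betti numbers $\beta^{i,j}$ indeed forces equality of the persistence diagrams. Everything after that reduction is purely formal.
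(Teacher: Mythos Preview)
Your argument is correct. The reduction to persistent Betti numbers via the multiplicity formula, followed by the diagram chase showing that $\phi_j$ restricts to an isomorphism $H^{i,j}(X)\to H^{i,j}(Y)$, is exactly the standard proof; the only substantive input, as you note, is that over a field the persistence diagram is determined by the ranks $\beta^{i,j}$.

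There is nothing to compare against, however: the paper does not prove this theorem. It is quoted as a known result from~\cite{herbert} (Edelsbrunner--Harer), and the authors merely \emph{apply} it later in the proof of Theorem~\ref{thm:optimalMorse}. So your proposal is not an alternative to the paper's proof but rather a self-contained justification of a result the paper takes for granted.
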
 

%The overall output of the computations is a list of \emph{persistence pairs} of the form (birth, death). This information can be visualized in different ways. One well-accepted idea is the persistence diagram~\cite{cohen:stability}, which is a set of points in a two-dimensional plane, each corresponding to a persistent homology class. The coordinates of such a point are the birth and death time of the related class.

% An important justification for the usage of persistence is the stability theorem. Cohen-Steiner et al.~\cite{cohen:stability} proved that for any two filtering functions $g$ and $h$, the difference of their persistence is always upper bounded by the $L^\infty$ norm of their difference:
% \begin{equation}
% \lVert h-g \rVert_\infty := \max_{x\in \mathcal{K}} \lvert h(x)-g(x) \rvert.
% \label{eq:stability}
% \end{equation}

%This enables robust estimation of how persistence is affected by perturbation of the input (e.g. noise). This guarantees that persistence can be used as a signature. Whenever two persistence outputs are different, we know that the functions are definitely different. 

\subsection{Computing persistence}
Let us have a filtered chain complex $\mathcal{K}$.
Boundary matrix $\partial$ of $\mathcal{K}$ encodes the boundary relations between cells of different dimensions. Column $i$ corresponds to the boundary of cell $c_i$, row $j$ corresponds to the coboundary of cell $c_j$. In case of $\mathbb{Z}_2$ coefficients it can be defined as follows:

\[
	\partial({i,j}) = \begin{cases}1 & \text{ if } c_j \text{ is a face of } c_i\\ 0 & \text{ otherwise}\\ \end{cases}
\]

By $\kappa(c_i,c_j) := \partial({i,j})$ we denote the \emph{incidence index} of cells $c_i$ and $c_j$. In order to compute persistence, a \emph{sorted} boundary matrix is required: For two columns (or rows) $i < j$, the corresponding cells must satisfy: $g(c_i) \leq g(c_j)$. Using such a matrix, we can compute persistence using matrix-reduction Algorithm~\ref{alg:reduction} as defined in~\cite{herbert}. The value $low(i)$ marks the maximum (lowest) position of a one in column $i$, if any. We assume it to be zero for zeroed columns. We say that there is a \emph{collision} at column $j$, if there exists a column $k < j$ such that $low(k) = low(j)$, provided $low(k)$ and $low(j)$ are nonzero. 
The matrix is said to be reduced if there are no collisions. In such a case all the lowest ones are unique. As proven in~\cite{herbert}, persistent homology is fully determined by the positions of lowest ones in the reduced sorted matrix: If column $i$ is zero, corresponding p-dimensional cell $c_i$ creates a infinite p-dimensional persistent homology class. For a non-zero column $j$ with $k=low(j)$, the corresponding (p+1)-cell $c_j$ kills a persistent homology class created by p-cell $c_k$. 

The algorithm proceeds with columns from left to right, removing any collisions. Later we will analyze the behavior of the algorithm to prove the correctness of our simplification algorithm.

\begin{algorithm}[!ht]
  \small
  \caption{Compute reduced matrix} 
  \label{alg:reduction}
  \begin{algorithmic}[1]
  \REQUIRE Sorted binary matrix $\partial$ of size $n \times n$
  \ENSURE Reduced binary matrix $R$, which encodes persistence
    \STATE $R := \partial$ 
    \FOR{j := 1 to n}        
        \WHILE{there exists in $R$ a nonzero column $k < j$ with $low(k) = low(j)$}
            \STATE add column $k$ to column $j$ (mod 2) and store as column $j$
        \ENDWHILE
    \ENDFOR 
  \end{algorithmic}
  \label{alg:matRed}
\end{algorithm}

A simple illustration of Algorithm~\ref{alg:matRed} can be found in Figure~\ref{fig:persistenceAlgorithm} in the Appendix.

\subsection{Algorithms and their complexity}
Applying the presented matrix-reduction algorithm to the input complex is the standard way to compute persistent homology groups. It works for general complexes in arbitrary dimensions. The worst-case complexity is $O(n^3)$, where $n$ is the size of the input complex. Milosavljevic et al.~\cite{milosavljevic2011zigzag} showed that persistent homology can be computed in matrix multiplication time $O(n^\omega)$ where the currently best estimation of $\omega$ is $2.3727$. Chen and Kerber~\cite{chen2011output} proposed a randomized algorithm to compute only pairs with persistence above a chosen threshold. Despite improving the theoretical complexity, it is unclear whether these methods are better in practice.

When focusing on $0$-dimensional homology, union-find data structures can be used to compute persistence in time $O(n \alpha(n))$ \cite{herbert}, where $\alpha$ is the inverse of the Ackermann functions and $n$ the size of the input. 

A recent variation of the standard algorithm, introduced by Chen and Kerber~\cite{chao} significantly reduces the amount of computations. This idea was also used in~\cite{wagner11} to compute persistence for $n-$dimensional images. In general, the regular structure of cubical complexes can be exploited, which allows for handling large inputs. In such a situation the size of the boundary matrix is the main obstacle. Preprocessing the input complex using discrete Morse theory, as proposed by Robins~\cite{vanessa}, significantly reduces the size of the boundary matrix, while preserving persistnce. In case of 3D grayscale images, an efficient parallel implementation was proposed in~\cite{gunther}, allowing for handling large ($\approx 1200^3$) images on commodity hardware. The standard matrix-reduction algorithm is used in the final step of computations.

The approach by Robins works for arbitrary complexes and in dimension three the preprocessing results in the smallest possible boundary matrix~\cite{vanessa} (counting the number of rows/columns). The algorithm used in~\cite{vanessa} depends crucially on simple-homotopy theory, which makes it hard to directly generalize the optimality result to higher dimensions. A recent paper by Mischaikow et al.~\cite{mischaikow} proposes a handy theoretical framework, where discrete Morse theory is extended from complexes to filtrations. 

In our approach, we use the existing algorithms for discrete Morse complex construction. We use them to iteratively simplify the input complex. Note that our approach is significantly different from the simplification scheme by Pascucci et al., where Morse complexes are iteratively simplified in terms of (roughly speaking) topology or in a sense: persistent homology. Our aim is different: persistence in never affected, and the simplification is only in terms of the number of cells representing the complex.

%{We also need to say here that further on we will work on a chain complexes with filtration. We have a filtered set $\mathcal{K} = \{ \mathcal{K}^n \}_{n \geq 0}$, where $\mathcal{K}^n$ denotes the elements of dimension $n$. Moreover we have a map $\partial_n : \mathcal{K}^{n} \rightarrow \mathcal{K}^{n-1}$ such that $\partial_{n} \partial_{n+1} = 0$ for every $n$. Moreover we have a filtration function $g : \mathcal{K} \rightarrow \mathbb{N}$ such that for every cell $A \in \mathcal{K}$ and for every cells $b_1,\ldots,b_n$ being in boundary of $A$ we have $g(A) \geq g(b_1),\ldots,g(b_n)$. By $\mathcal{K}_n$ we denote $\{ A \in \mathcal{K}\ |\ g(A) \leq n  \}$. It is clear that $\mathcal{K}_n$ is a subcomplex of $\mathcal{K}$ for every $n$.}

\section{Discrete Morse Theory.}
\label{sec:dmt}
\subsection{Morse matching and Morse graph}

In this section an algorithmic introduction to discrete Morse theory is given. For further theoretical details please consult~\cite{forman,kozlov}. Let us have a complex $\mathcal{K}$. Discrete Morse theory partitions the cells of $\mathcal{K}$  into \emph{matched cells} and \emph{critical cells}. The critical cells, together with a boundary operator we describe later, form a chain complex called the \emph{Morse complex}. Importantly, this Morse complex has homology isomorphic with the homology of the initial complex. A procedure to compute Morse complex is given in Algorithm~\ref{alg:MorseCmplx}.

\begin{algorithm}[!ht]
  \small
  \caption{Compute Morse complex} 
  \label{alg:morseComplex}
  \begin{algorithmic}[1]
  \REQUIRE Input complex $\mathcal{K}$
  \ENSURE Resulting Morse complex
  \STATE $M$ := acyclic Morse matching on $\mathcal{K}$
  \STATE $C$ := list of critical cells of $M$
  \STATE $G$ := Morse graph of $M,C$
  \STATE $bd$ := compute Morse Boundary of $G$ (as in Algorithm~\ref{alg:morseBD})
  \STATE return ($C$, $bd$)
  \end{algorithmic}
  \label{alg:MorseCmplx}
\end{algorithm}

The first step in constructing a Morse complex requires finding an acyclic Morse matching $M$ of the complex $\mathcal{K}$. 
The matching is a partial map $M : \mathcal{K} \pfun   \mathcal{K}$. Each cell of $\mathcal{K}$ can be matched with exactly one of its co-faces. Some cells can remain unmatched, and are called \emph{critical}. These cells constitute the resulting Morse complex. 

Let us introduce a concept of a \emph{Morse graph} of a complex $\mathcal{K}$ and matching $M$. It is a directed graph whose vertices are formed by cells of a complex. A directed edge from vertex $A$ to vertex $b$ is added whenever $b$ is in the boundary of $A$ and the cells $A$ and $b$ are not matched in $M$. If they are matched in $M$, a direct edge from $b$ to $A$ is added in the Morse graph. The matching $M$ is called \emph{acyclic} if the corresponding Morse graph is a directed acyclic graph (DAG). The paths of this graph are often called $V-$paths. There are various strategies of obtaining acyclic Morse matchings. Later in this paper we assume that every matching is acyclic. 

A Morse matching is called \emph{perfect} if each unmatched cell corresponds to a homology generator of the original complex. (We choose to talk about (perfect) matchings, but in the literature (perfect) Morse functions, vector-fields and complexes are discussed.) This depends on the choice of coefficients, for example in case of a field coefficients $F$, we can talk about $F$-perfect matchings \cite{ayala}. Some spaces do not admit perfect matchings, for example the Dunce hat (presented in Section~\ref{sec:preview}), being contractible but non-collapsible. In this case some critical cells are spurious, in the sense that they do not correspond to any homology generators. 

One can try to construct a best possible matching, minimizing the number of critical cells. This problem is known to be NP-complete and MAXSNP-hard \cite{joswig}. It means that computing the best possible matching is computationally expensive and there is no hope to find a fully-polynomial time approximation strategy. As a result, no polynomial-time algorithm can give arbitrarily good bounds on the number of spurious critical cells.

Once an acyclic Morse matching $M$ is obtained, we proceed with computing the Morse boundary. This procedure is described in~\cite{forman}. The idea is illustrated in Figure~\ref{fig:MorseGraph}. Forman \cite{forman} proved that the resulting Morse complex has isomorphic homology to the homology of the initial complex. He also provided a formula which computes the boundary of each cell in a Morse complex. Kozlov generalized these proofs to the setting of arbitrary chain complexes \cite{kozlov}. One can use Morse complex construction to compute homology. Later we show that similar construction can be also used to compute persistence.

\subsection{Discrete Morse theory for filtered complexes.}
Recently there were first successful attempts to use discrete Morse theory to compute persistence~\cite{vanessa,gunther} (in case of 3-d gray-scale images), \cite{wadzewski} (in case of 2-manifolds). The first successful attempt to provide a Morse-theoretic categorical framework for persistent homology was made in~\cite{mischaikow}. 

In this section we will first recall the basic ideas from~\cite{mischaikow}. We say that the Morse matching $M$ is \emph{compatible with
 filtration} of $\mathcal{K}$ if for every matched $A \in \mathcal{K}$, $g(A) = g(M(A))$\footnote{By $M(A)$ we denote the element matched with $A$.}. In other words, the matchings are made between elements of the same filtration level. Consequently,  directed paths cannot move upwards the filtration (if they did, we would lose the sub-complex filtration property in the corresponding Morse complex, because a cell could enter the filtration strictly
 before its faces). 

The key result in~\cite{mischaikow} is that the persistence diagram of a filtered complex $\mathcal{K}$ and a Morse complex $\mathbb{M}(\mathcal{K})$ with the Morse matchings compatible with filtration are the same. In Figure~\ref{fig:compatibleField} an example of Morse matchings compatible and non-compatible with filtration are shown. 
\begin{figure}[!h]
\centerline{\includegraphics[scale=0.3]{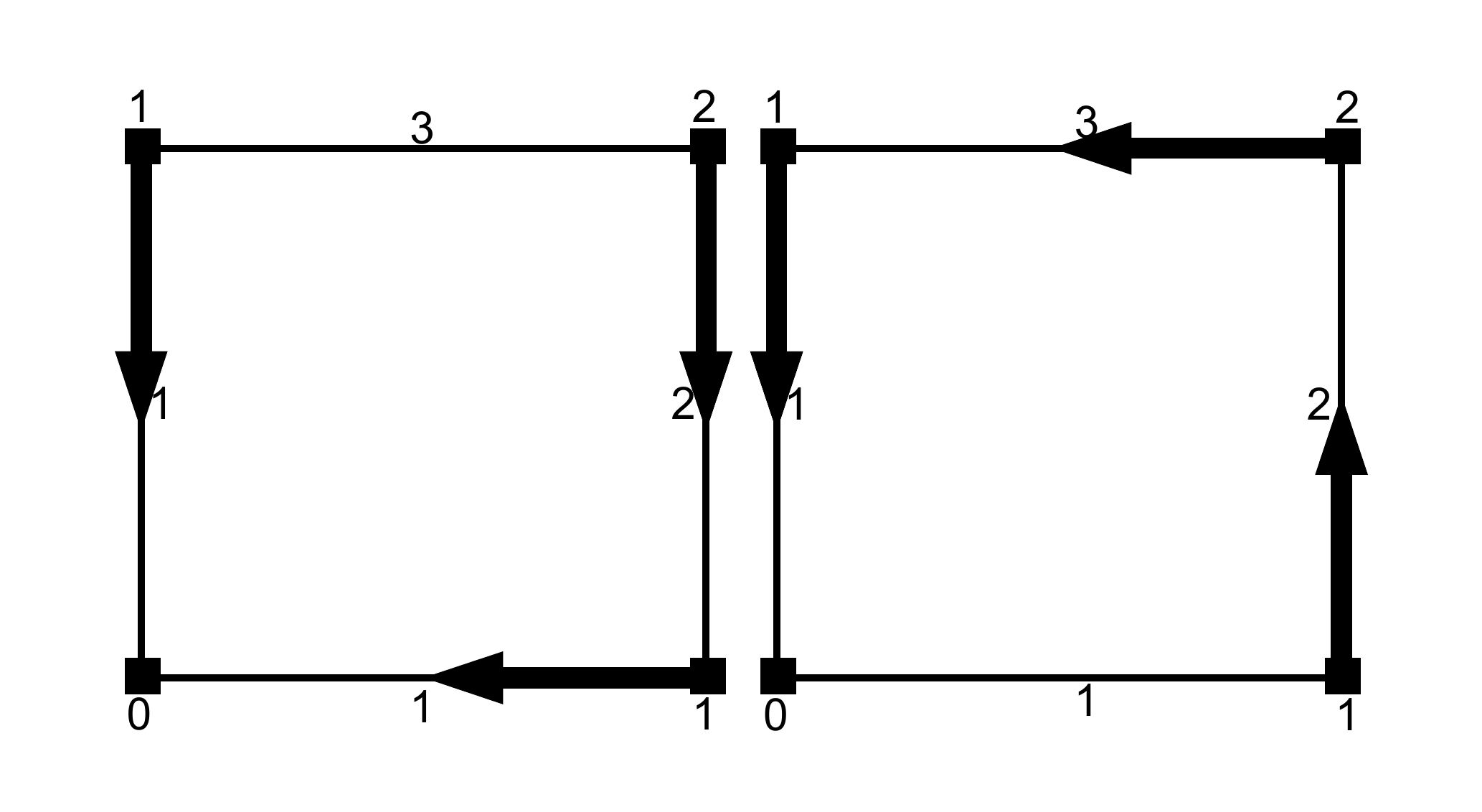}}
\caption{On the left the Morse matching compatible with filtration. In this case persistent homology for both initial and the Morse complex is $[0,\infty ]$ in dimension $0$ and $[3,\infty ]$ in dimension $1$. On the right a correct Morse matching in a sense of standard Discrete Morse Theory which is not compatible with filtration is depicted. The persistent homology of the Morse complex on the right in dimension one is $[1,\infty]$ which is not correct.}
\label{fig:compatibleField}
\end{figure}

\subsection{Computing Morse complex with graph algorithms}
In this section we will show that the entire Morse complex construction can be computed using standard graph algorithms. The chain complex (with $\mathbb{Z}_2$ coefficients) can be interpreted as a graph -- namely the Morse graph. We assume that initially no matchings are made. The whole construction can be divided into two essential parts: finding an acyclic Morse matching and computing the Morse boundary. Both parts are described below. We want to point out that for presentation's sake the algorithms presented in this section are not necessarily optimal. For that reason we also present just a version for binary coefficients. They can be easily generalized to arbitrary field coefficients. 

\begin{figure}[!h]
\centerline{\includegraphics[scale=0.5]{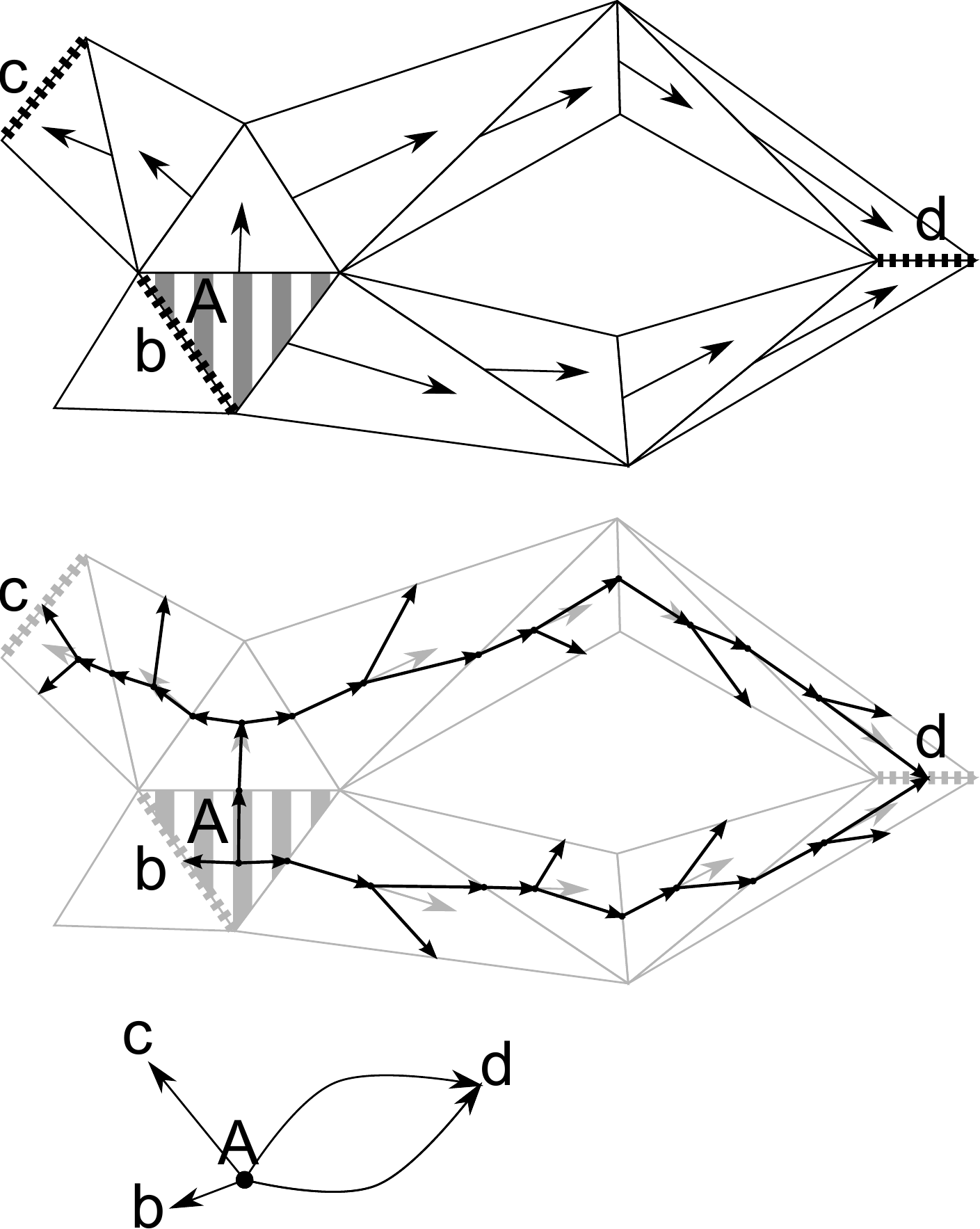}}
\caption
{
This picture shows the process of computing the Morse boundary of cell $A$. A Morse matching is shown. The part of corresponding (acyclic) Morse graph is build. Finally boundary relations between cells are computed, by following paths emanating from the boundary of cell $A$ and ending at critical cells. Note that there are two paths between cells $A$ and $d$, which yields boundary relation equal to zero for coefficients in $\mathbb{Z}_2$ ($d$ is not in the boundary of $A$).
}
\label{fig:MorseGraph}
\end{figure}

\subsubsection{Computing acyclic matching.}
There are many possible graph-based strategies for Morse matchings. Some strategies based on the idea of BFS spanning tree have been described in the literature~\cite{joswig,lewinerHomology,ctic}. In general, the problem of performing a Morse matching is equivalent to the following one -- which directed edges in Morse graph can be reversed so that the graph remains acyclic.  This is closely related to the \emph{minimum feedback arc set} problem. While this problem is NP-hard, efficient approximation schemes exist~\cite{even}. 

For an illustration, a basic algorithm descried in~\cite{cticKM} will be reminded in Algorithm~\ref{alg:morseViaCore}.
\begin{algorithm}[!ht]
  \small
  \caption{Compute Morse matching}   
  \begin{algorithmic}[1]
  \REQUIRE Morse graph $G$;
  \ENSURE Changed graph $G$ (edges between matched elements are reversed);
		\WHILE {Not all vertices of $G$ are marked}
			\STATE Let $i$ be the minimal dimension of unmarked element in $G$;
			\STATE Pick $A$, unmarked i-dimensional cell in $G$ and mark as \emph{critical};
			\WHILE {There exists unmatched element $A \in G$ with unique unmatched element $b$ in boundary} \label{restriction:here}				
				\STATE Make a Morse matching $(A,b)$, i.e. reverse an edge from $A$ to $b$ in $G$; 
				\STATE Mark $A$ and $b$ as matched;				
			\ENDWHILE
		\ENDWHILE
  \end{algorithmic}
  \label{alg:morseViaCore}
\end{algorithm}
The proof that the obtained graph is acyclic is easy but technical. Therefore it will not be presented here. 

To put restrictions on the matching one should modify line~\ref{restriction:here} of Algorithm~\ref{alg:morseViaCore}. In particular, one can ensure that the matchings are compatible with filtration. This is needed in case of persistence computations.

\subsubsection{Computing Morse boundaries.}

Let us state the problem of computing Morse boundary in terms of graph theory. The Morse boundary of a critical cell is formed by the set of critical cells which are reachable by an \emph{odd} number of paths in the Morse graph. This is a special case of Forman's formula~\cite{forman} in case of $\mathbb{Z}_2$ coefficients. Since the number of such paths can grow exponentially in the number of cells, brute force calculation is ineffective (as noted in \cite{mischaikow}). However, this problem can be solved efficiently, exploiting the fact that the Morse graph is acyclic. 

The following Algorithm~\ref{alg:morseBD} is simple to implement, and works in pessimistic time $O(c * (\|V\|+\|E\|))$, where $V$ and $E$ are respectively the vertices and edges of graph $G$ and $c$ is the number of critical cells. For a runtime- and memory-optimal one see~\cite{gunther}.

Let $P_{s}(t)$ denote the number of distinct paths leading from vertex $s$ to $t$, and $prev(v) = \{x \ | \ (x,v) \in E \}$ is the set of vertices preceding $v$ in the directed graph. We have an obvious recurrence relation:

\begin{displaymath}
P_s(u) =
\begin{cases}
 1 \text{ for } u = s \\
 \sum_{v \in prev(u)} {P_s(v)} \text{ for }  u \neq s
\end{cases}
\end{displaymath}

This recurrence can be computed directly and also efficiently using memoization, but we propose an elegant graph-theoretical algorithm. To compute $P_s(v)$ the summation is done indirectly in line \ref{neighbours_for2} of the Algorithm~\ref{alg:morseBoundaries} by adding the value $P_s(u)$ where $u \in prev(v)$.

\begin{algorithm}[!ht]
  \small
  \caption{Compute Morse boundaries from Morse graph} 
  \label{alg:morseBoundaries}
  \begin{algorithmic}[1]
  \REQUIRE Directed Morse graph $G := (V,E)$
  \ENSURE Boundary relation $\partial$ of the Morse complex
  \STATE sort G topologically
	\FOR{each critical vertex $s$}
		\STATE assign $P_s(v) := 0$ for each vertex $v \neq s$
		\STATE{assign $P_s(s) := 1$} \label{init}
	    \FOR{each vertex $c$ following $s$ in topological order} \label{inner_for}	
			\IF{$c$ is critical}
			\STATE{$\partial(s,c)$ := $P_s(c)$ mod 2}									
			\ELSE
			\FOR{each $v \ | \ (c,v) \in E$ } \label{neighbours_for}
				\STATE{$P_s(v)$ += $P_s(c)$} \label{neighbours_for2}
			\ENDFOR   
			\ENDIF			
		\ENDFOR   
	\ENDFOR
  \end{algorithmic}
  \label{alg:morseBD}
\end{algorithm}

\begin{thm}
Algorithm~\ref{alg:morseBoundaries} is correct.
\end{thm}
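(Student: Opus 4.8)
The plan is to show that Algorithm~\ref{alg:morseBoundaries} correctly computes, for each pair of critical cells $(s,c)$, the value $\kappa^{\mathbb{M}}(s,c) = P_s(c) \bmod 2$, where $P_s(c)$ is the number of distinct directed paths from $s$ to $c$ in the Morse graph $G$; by Forman's formula (cited in the excerpt, specialized to $\mathbb{Z}_2$), this is exactly the Morse boundary coefficient. So correctness of the algorithm reduces to two claims: (i) the quantities computed by the algorithm and stored in the variables $P_s(v)$ at the moment they are read in line~\ref{neighbours_for2} or line with $\partial(s,c)$ agree with the path-counting function $P_s(v)$ defined by the displayed recurrence; and (ii) that path-counting function, reduced mod~2, is the Morse boundary.

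First I would establish (i) by induction on the position of a vertex $v$ in the fixed topological order of $G$ (line~1 of the algorithm). Fix a critical source $s$. After the initialization (lines with \ref{init}), $P_s(s)=1$ and $P_s(v)=0$ for $v \neq s$, which matches the recurrence at $s$ and the ``no contributions yet'' state for everything after $s$. The key invariant to maintain is: \emph{at the moment the outer inner-loop (line~\ref{inner_for}) reaches vertex $c$, the stored value $P_s(c)$ equals $\sum_{v \in prev(c)} P_s(v)$}, i.e. equals the true number of paths from $s$ to $c$. This holds because every predecessor $v$ of $c$ precedes $c$ in topological order (edges go forward), so $v$ was processed earlier, and when $v$ was processed the loop over its out-neighbours (line~\ref{neighbours_for}) added $P_s(v)$ to $P_s(c)$ exactly once; conversely no vertex that is not a predecessor of $c$ ever writes to $P_s(c)$. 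Summing these contributions gives the recurrence, and since vertices before $s$ in the order contribute $0$ (they are unreachable from $s$, or at least the algorithm never initializes them to nonzero and they are not written before being frozen), the stored value is the honest path count. A small point to check: the algorithm only iterates $c$ over vertices \emph{following} $s$, so predecessors of $c$ that lie before $s$ never get a chance to inject spurious paths — consistent with the fact that such vertices are not reached from $s$ in a DAG where $s$'s predecessors can't be $s$'s successors.

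Then (ii): once $c$ is a critical vertex, the algorithm sets $\partial(s,c) = P_s(c) \bmod 2$. By the invariant just proved this is the parity of the number of $V$-paths from $s$ to $c$ in the Morse graph, which is precisely the $\mathbb{Z}_2$ Morse boundary coefficient $\kappa^{\mathbb{M}}(s,c)$ as given by Forman's formula and restated in the excerpt (``the Morse boundary of a critical cell is formed by the set of critical cells reachable by an odd number of paths''). One subtlety worth a remark: when $c$ is critical the algorithm does \emph{not} propagate $P_s(c)$ further along out-edges of $c$. This is correct because in the Morse graph a $V$-path contributing to a boundary relation must pass only through matched (non-critical) cells between its two critical endpoints; a path that revisited a critical cell would correspond to a different boundary relation, not to $\kappa^{\mathbb{M}}(s,c)$. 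So halting propagation at critical vertices is exactly what Forman's formula demands. Finally I would note termination and the $O(c(\|V\|+\|E\|))$ bound: the topological sort is $O(\|V\|+\|E\|)$, and for each of the $c$ critical sources we do one pass touching each vertex and each edge once.

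The main obstacle — really the only place that needs care rather than bookkeeping — is pinning down the loop invariant in step (i) and making the argument airtight that (a) \emph{every} path from $s$ to $c$ is counted and (b) \emph{no} path is counted twice. Both follow from acyclicity plus the compatibility of the topological order with $E$: a path $s = v_0 \to v_1 \to \dots \to v_k = c$ has strictly increasing topological indices, so each prefix extension $v_{i}\to v_{i+1}$ is executed exactly once (when the loop reaches $v_i$) and contributes the already-correct count $P_s(v_i)$ to $P_s(v_{i+1})$; a clean way to formalize ``every path counted exactly once'' is to set up a bijection between paths from $s$ to $c$ and the sequence of $+\!=$ operations that build up $P_s(c)$, or equivalently to observe that the algorithm is just a faithful bottom-up evaluation of the displayed recurrence in an order consistent with its dependency DAG, which has a unique solution. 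That observation — recurrence has a unique solution, algorithm evaluates it in dependency order — is essentially the whole proof, and I would lead with it and then fill in the two correctness claims above.
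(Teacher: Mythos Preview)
Your proposal is correct and follows essentially the same approach as the paper: induction over the topological order with the invariant that $P_s(c)$ is correct and final at the moment the inner loop reaches $c$. Your version is more thorough --- you explicitly connect the computed counts to Forman's formula and justify why propagation halts at critical vertices, points the paper's brief proof leaves implicit.
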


\begin{proof}
We prove the correctness of the algorithm by induction on the iteration of the loop in line~\ref{inner_for}. The desired invariant is that whenever $P_s(c)$ is used, this value is already final and correct. Clearly, the value $P_s(s)$ is initialized correctly in line~\ref{init}, so the correct value is used during the first iteration. Assume that the invariant holds for the first $i$ iterations and vertex $c$ is now processed. Note that the value of $c$ depends on the values of $prev(c)$. Since we proceed in topological-sort order, all the vertices in $prev(c)$ have already been processed. By inductive assumption the values used to compute $P_s(c)$ were correct and final, therefore the value $P_{s}(c)$ is also correct and final.
\end{proof}

\section{Iterated Morse Complex for homology}

\label{sec:IteraterMorseComplex}
In this section the concept of \emph{iterated Morse complex} is presented. Normally one aims at finding a Morse complex minimizing the number of critical cells. As mentioned earlier this is a hard algorithmic problem and we do not tackle it. Instead we use an algorithm to iteratively construct a sequence of Morse complexes. If at a certain stage the obtained Morse complex is far from optimal, further iterations will be necessary to compute the homology of the considered complex. Still, the worst case computational time is cubical. The results presented in this section has already been sketched in~\cite{ctic}. 

Let $\mathcal{C}$ be a category of chain complexes and let $\mathbb{M} : \mathcal{C} \rightarrow \mathcal{C}$ be a functor taking a chain complex and assigning it a Morse complex constructed on it. There are many possible strategies to construct Morse complexes. We assume that if a chain complex $\mathcal{K} \in \mathcal{C}$ has some available Morse matchings, $\mathbb{M}$ does at least one of them\footnote{The simplest example of algorithm that fulfill this requirement is $\mathbb{M}$ which searches for a first possible Morse matching in $\mathcal{K}$, makes it, and computes the Morse complex.}. This property of $\mathbb{M}$ will be referred to as \emph{vitality}. Except from vitality no extra assumptions are put on $\mathbb{M}$. 

For a given chain complex $\mathcal{K} \in \mathcal{C}$, \emph{iterated} Morse complex $\mathbb{M}^{\infty}(\mathcal{K})$ is the fixed point of the iteration $\mathbb{M}(\mathcal{K}), \mathbb{M}^2(\mathcal{K})
 = \mathbb{M}(\mathbb{M}(\mathcal{K})), \mathbb{M}^3(\mathcal{K}), \ldots$. It is clear that $\card{\mathcal{K}} \geq \card{\mathbb{M}(\mathcal{K})} \geq
 \card{\mathbb{M}^2(\mathcal{K})} \geq \ldots$. Moreover due to the vitality of $\mathbb{M}$ the above inequalities are strict as long as there are some
 Morse matchings to be made in the intermediate complexes. Therefore, the fixed point $\mathbb{M}^{\infty}(\mathcal{K})$ is obtained in a finite number of iterations. Below we
 show that $\mathbb{M}^{\infty}(\mathcal{K})$ gives an instant information about homology of $\mathcal{K}$. To achieve this, we will use algebraic version of discrete Morse theory
 due to Kozlov~\cite{kozlov}. It states that two elements $A,B$ can be matched if and only if $\kappa(A,B)$ is invertible. Since in this paper we
 consider only homology with field coefficients, $\kappa(A,B)$ is always invertible provided it is nonzero. This fact implies the following straightforward
 lemmas:
\begin{lemma}
For every $A \in \mathbb{M}^{\infty}(\mathcal{K})$ both boundary and coboundary of $A$ are empty.
\end{lemma}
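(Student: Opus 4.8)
The plan is to argue by contradiction, using the fixed-point characterization of $\mathbb{M}^{\infty}(\mathcal{K})$ together with the vitality of $\mathbb{M}$ and Kozlov's matchability criterion. First I would record the basic consequence of the definition: since $\mathbb{M}^{\infty}(\mathcal{K})$ is the fixed point of the iteration, $\mathbb{M}(\mathbb{M}^{\infty}(\mathcal{K})) = \mathbb{M}^{\infty}(\mathcal{K})$, so in particular $\card{\mathbb{M}(\mathbb{M}^{\infty}(\mathcal{K}))} = \card{\mathbb{M}^{\infty}(\mathcal{K})}$. On the other hand, whenever $\mathbb{M}$ performs at least one matching on a complex, the two matched cells become non-critical and hence disappear from the resulting Morse complex, so the cardinality strictly drops (this is exactly the strictness already noted for the chain $\card{\mathcal{K}} \geq \card{\mathbb{M}(\mathcal{K})} \geq \ldots$). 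Combining the two observations, $\mathbb{M}^{\infty}(\mathcal{K})$ must be a complex on which $\mathbb{M}$ performs \emph{no} matching, and by vitality this can only happen if $\mathbb{M}^{\infty}(\mathcal{K})$ admits no available Morse matching at all.

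Next I would translate ``no available Morse matching'' into the vanishing of all incidence indices. Suppose, for contradiction, that some $A \in \mathbb{M}^{\infty}(\mathcal{K})$ has non-empty boundary; then there is a cell $b$ with $\dim b = \dim A - 1$ and $\kappa(A,b) \neq 0$. Since we work with field coefficients, $\kappa(A,b)$ is invertible, so by Kozlov's version of discrete Morse theory the pair $(A,b)$ is a legitimate algebraic Morse matching. It remains to check that this single matching is acyclic: in the associated Morse graph all edges except the reversed one descend exactly one dimension, while the reversed edge goes from the $(\dim b)$-cell $b$ up to the cell $A$ of dimension $\dim b + 1$; any directed cycle would have to return from $A$ down to $b$ along descending edges, which is impossible, as it would have to pass through dimension $\dim b$ without landing on $b$ itself. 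Hence $(A,b)$ is an available Morse matching on $\mathbb{M}^{\infty}(\mathcal{K})$, contradicting the previous paragraph. This shows that the boundary of every $A \in \mathbb{M}^{\infty}(\mathcal{K})$ is empty.

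Finally, for the coboundary I would invoke the symmetric argument: if $A$ had a non-empty coboundary, there would be a cell $B$ with $A$ in its boundary and $\kappa(B,A) \neq 0$, hence invertible, so $(B,A)$ would again be an available (single-pair, acyclic) Morse matching and yield the same contradiction. Equivalently, one may simply observe that the vanishing of all boundaries means the boundary matrix of $\mathbb{M}^{\infty}(\mathcal{K})$ is zero, whence its transpose, which encodes coboundaries, is zero as well. I do not expect a genuine obstacle here; the only point requiring care is the acyclicity check for a single matched pair, together with making precise what ``available Morse matching'' means, so that the appeal to vitality is fully rigorous.
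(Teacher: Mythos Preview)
Your argument is correct and follows exactly the same route as the paper: the paper's proof is the single sentence ``If there exists $A \in \mathbb{M}^{\infty}(\mathcal{K})$ with $B$ in (co)boundary, then $\mathbb{M}$ would eventually make a Morse matching between $A$ and $B$,'' i.e.\ precisely your appeal to vitality plus Kozlov's invertibility criterion for field coefficients. Your additional care in spelling out the fixed-point cardinality step and the acyclicity of a single matched pair is more than the paper provides, but the underlying idea is identical.
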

\begin{lemma}
$\beta_i(\mathcal{K}) = \card{\{ A \in \mathbb{M}^{\infty}(\mathcal{K}) | \dim A = i \}}$.
\end{lemma}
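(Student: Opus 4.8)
The plan is to deduce the two lemmas together, since the second follows almost immediately from the first once we invoke the homology-invariance of the Morse construction. The first lemma — that every cell $A \in \mathbb{M}^{\infty}(\mathcal{K})$ has empty boundary and empty coboundary — is really the crux, and it is a direct consequence of the fixed-point property together with the Kozlov matchability criterion (an invertible incidence index suffices to match, and over a field every nonzero $\kappa(A,B)$ is invertible). First I would argue by contradiction: suppose some $A \in \mathbb{M}^{\infty}(\mathcal{K})$ has a nonempty boundary, i.e. there is a cell $b$ with $\kappa(A,b)\neq 0$, hence $\kappa(A,b)$ invertible. Then $(A,b)$ is an available Morse matching in $\mathbb{M}^{\infty}(\mathcal{K})$, so by \emph{vitality} the functor $\mathbb{M}$ would perform at least one matching on $\mathbb{M}^{\infty}(\mathcal{K})$, giving $\card{\mathbb{M}(\mathbb{M}^{\infty}(\mathcal{K}))} < \card{\mathbb{M}^{\infty}(\mathcal{K})}$, contradicting that $\mathbb{M}^{\infty}(\mathcal{K})$ is a fixed point of $\mathbb{M}$. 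The coboundary case is symmetric: a cell in the coboundary of $A$ is a cell $B$ with $\kappa(B,A)\neq 0$, which again furnishes an available matching $(B,A)$ and the same contradiction.

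Having established that $\mathbb{M}^{\infty}(\mathcal{K})$ is a chain complex in which every boundary map is the zero map, I would then compute its homology directly: with zero differentials we have $Z_i = C_i$ and $B_i = 0$ for every $i$, so $H_i(\mathbb{M}^{\infty}(\mathcal{K})) = C_i(\mathbb{M}^{\infty}(\mathcal{K}))$, whose rank is exactly $\card{\{A \in \mathbb{M}^{\infty}(\mathcal{K}) \mid \dim A = i\}}$. Finally, each application of $\mathbb{M}$ preserves homology (Forman's theorem, extended to arbitrary chain complexes by Kozlov, as cited in Section~\ref{sec:dmt}), so by a trivial induction on the finite number of iterations needed to reach the fixed point, $H_i(\mathbb{M}^{\infty}(\mathcal{K})) \cong H_i(\mathcal{K})$. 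Taking ranks gives $\beta_i(\mathcal{K}) = \card{\{A \in \mathbb{M}^{\infty}(\mathcal{K}) \mid \dim A = i\}}$, as claimed.

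The only genuinely delicate point — and the step I would be most careful about — is making sure the reduction to the empty-(co)boundary condition is airtight: specifically, that "there is an available Morse matching" is correctly characterized over a field as "there exist cells $A,B$ with $\kappa(A,B)\neq 0$", and that vitality then really forces a strict decrease in cardinality. This hinges on Kozlov's criterion (invertibility of $\kappa$ is sufficient for matchability) and on the already-noted monotonicity $\card{\mathcal{K}} \geq \card{\mathbb{M}(\mathcal{K})} \geq \cdots$ being strict whenever a matching is available. Everything else is routine: the homology computation for a complex with zero differentials is immediate, and the homology-preservation induction is standard. I would also note in passing that the finiteness of the number of iterations, needed for the induction, was already argued in the text just before the lemmas.
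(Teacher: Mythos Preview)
Your proposal is correct and follows essentially the same route as the paper: use vitality plus Kozlov's invertibility criterion (over a field, nonzero $\kappa$ is invertible) to argue by contradiction that every cell in $\mathbb{M}^{\infty}(\mathcal{K})$ has empty boundary and coboundary, deduce that the fixed-point complex has zero differentials so its homology is freely generated by its cells, and then invoke Kozlov's homology-preservation result inductively across the finitely many iterations. The paper's argument is terser but structurally identical; your write-up simply makes explicit the steps the paper leaves implicit.
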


The proof of the first lemma is a direct consequence of vitality of $\mathbb{M}$. If there exists $A \in \mathbb{M}^{\infty}(\mathcal{K})$ with $B$ in (co)boundary, then $\mathbb{M}$ would eventually make a Morse matching between $A$ and $B$. 

The second lemma is a direct consequence of the first one. Once every element has empty boundary, it is a cycle. Once it has empty coboundary, it cannot be a boundary. Therefore every element in $\mathbb{M}^{\infty}(\mathcal{K})$ generates a homology class of $\mathcal{K}$. Moreover, due to Section 11.3 in~\cite{kozlov} it is clear that the homology of $\mathbb{M}^i(\mathcal{K})$ and $\mathbb{M}^{i+1}(\mathcal{K})$ are isomorphic. Therefore the homologies of the complex are preserved through the entire iteration. 

As already mentioned, the idea of iteration of Morse complex construction implies that we do not have to construct near optimal Morse complexes. Let $n$ be the cardinality of $\mathcal{K}$. In the worst case, after $\lfloor n/2 \rfloor$ iterations of Morse complex procedure, an \emph{iterated} Morse complex is obtained\footnote{Maximal number of iterations needed is the floor of cardinality of basis of $\mathcal{K}$ divided by two minus sum of all the Betti numbers of $\mathcal{K}$.}. This is a consequence of vitality of $\mathbb{M}$. 

\begin{algorithm}[!ht]
  \small
  \caption{Compute homology with iterated Morse decomposition} 
  \label{alg:homology}
  \begin{algorithmic}[1]
  \REQUIRE Initial complex $C$ of dimension $d$
  \ENSURE Betti numbers $\beta_i$    
    \WHILE{true}        
            \STATE M := Build Morse complex of C (Algorithm~\ref{alg:morseComplex})
            \IF{M = C}
				\STATE break
            \ENDIF 
			C := M	
    \ENDWHILE
    
    \FOR{i := 0 to d}        
	\STATE $\beta_i$ := number of i-dimensional cells of $C$
	\ENDFOR
    
  \end{algorithmic}  
\end{algorithm}

Algorithm~\ref{alg:homology} describes how to comute the Betti numbers using iterated Morse decomposition. An example of the Morse complex construction on a Dunce hat has already been presented in Section~\ref{sec:preview}. In case of the Dunce hat there does not exist a perfect Morse complex. At the end of this section, in Figure~\ref{fig:iteratedCmplxPers}, we show a simple example of the presented construction, using a sub-optimal algorithm $\mathbb{M}$ to construct Morse complexes with sub-optimal Morse matchings. 

\section{Iterated Morse Complex for persistent homology}
\label{sec:iteratedMorseComplexForPersistence}

In~\cite{mischaikow} it is shown that when a Morse complex is constructed based on a Morse matching compatible with filtration, persistent homology of the initial complex and the one of the Morse complex are isomorphic. Here we provide a further consequence of this result. Let us take a vital functor $\mathbb{M}$ acting from a category of \emph{filtered} chain complexes to itself. We assume that the Morse matching used to construct $\mathbb{M}(\mathcal{K})$ is compatible with filtration of $\mathcal{K}$. Filtration values of cells in $\mathbb{M}(\mathcal{K})$ are inherited from the filtration of cells in $\mathcal{K}$. As in Section~\ref{sec:IteraterMorseComplex}, we construct $\mathbb{M}^{\infty}(\mathcal{K})$.

In this section we show that each cell in $\mathbb{M}^{\infty}(\mathcal{K})$ either creates or kills a feature of nonzero persistence. Therefore, if we want to minimize the number of cells, the resulting complex is the minimal complex encoding persistence of the original complex. An example is presented in Figure~\ref{fig:morseIterExample}.
\begin{figure}[!h]
\centerline{\includegraphics[scale=0.5]{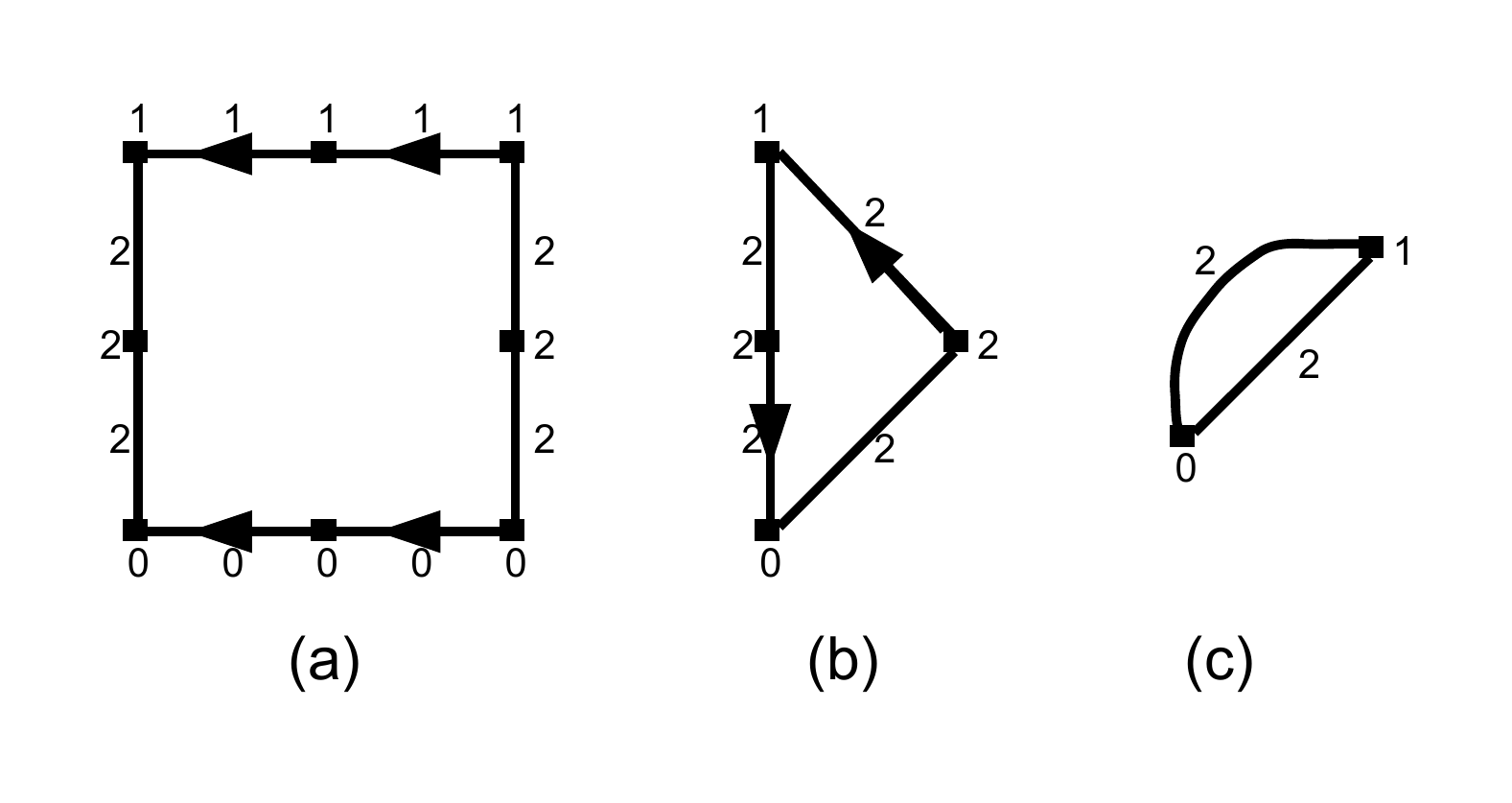}}
\caption{(a) The initial complex $\mathcal{K}$ with a few Morse matchings indicated with arrows. (b) Morse complex $\mathbb{M}^1(\mathcal{K})$ obtained from $\mathcal{K}$. A few possible Morse matchings indicated with arrows. (c) Final Morse complex $\mathbb{M}^2(\mathcal{K})$  obtained from $\mathbb{M}^1(\mathcal{K})$.}
\label{fig:morseIterExample}
\end{figure}

In~\cite{mischaikow} it is only assumed that the filtered chain complex is given at the input. Since at each stage of \emph{iterated} Morse complex computation we have a  chain complex, one can iterate further the construction. The main strength of our approach is based on the following novel observation. The resulting complex $\mathbb{M}^{\infty}(\mathcal{K})$ has the following property: For every $A \in \mathbb{M}^{\infty}(\mathcal{K})$ and for every $b_1,\ldots,b_n$ in boundary of $A$ we have $g(A) > g(b_1),\ldots,g(b_n)$. It is because if there existed $b_i$ in the boundary of $A$ such that $g(b_i) = g(A)$, then a Morse matching could be made between $A$ and $b_i$ (since coefficients in a field are used). This would contradict the vitality assumption of $\mathbb{M}$. Having this simple property of the resulting complex $\mathbb{M}^{\infty}(\mathcal{K})$ we can now present the main theorem of this section:
\begin{thm}
\label{thm:optimalMorse}
Let $\mathbb{M}^{\infty}(\mathcal{K})$ be the iterated Morse complex obtained from the initial filtered chain complex $\mathcal{K}$ by iterative construction of Morse complexes using Morse matchings compatible with filtration. Then:\\
$\mathcal{K}$ and $\mathbb{M}^{\infty}(\mathcal{K})$ have the same persistence. \emph{(Correctness)} 
\\
Every element $A \in \mathbb{M}^{\infty}(\mathcal{K})$ either starts or terminates a nonzero length persistence interval. \emph{(Optimality)} 
\end{thm}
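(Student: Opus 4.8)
The plan is to establish the two assertions separately. \emph{Correctness} is essentially a bookkeeping consequence of the results already cited. Since $\mathbb{M}$ is vital, the chain $\card{\mathcal{K}} \ge \card{\mathbb{M}(\mathcal{K})} \ge \card{\mathbb{M}^2(\mathcal{K})} \ge \cdots$ is strictly decreasing until it stabilizes, so $\mathbb{M}^{\infty}(\mathcal{K}) = \mathbb{M}^{N}(\mathcal{K})$ for some finite $N$. Each single step, from $\mathbb{M}^{i}(\mathcal{K})$ to $\mathbb{M}^{i+1}(\mathcal{K})$, is the construction of a Morse complex via a matching compatible with filtration, so by the theorem of Mischaikow and Nanda~\cite{mischaikow} (whose mechanism is exactly Theorem~\ref{thm:pet} applied level by level) the two complexes have the same persistence. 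Composing these equalities for $i=0,\dots,N-1$ yields that $\mathcal{K}$ and $\mathbb{M}^{\infty}(\mathcal{K})$ have the same persistence. The only point requiring care is that at each stage the inherited filtration values still define a legitimate filtering function on the Morse complex, i.e. no face acquires a larger value than its coface; but this is precisely the compatibility condition on the matchings, so it is automatic.

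For \emph{optimality}, I would work with the sorted boundary matrix $\partial$ of $\mathbb{M}^{\infty}(\mathcal{K})$ and the reduced matrix $R$ produced by Algorithm~\ref{alg:reduction}, and exploit the property proved just before the statement: for every $A \in \mathbb{M}^{\infty}(\mathcal{K})$ and every $b$ in its boundary, $g(b) < g(A)$ \emph{strictly}. In matrix language this says that every nonzero entry of the original column of $A$ lies in a row whose cell has filtration value strictly smaller than $g(A)$. The claim I would prove is the following invariant of Algorithm~\ref{alg:reduction}: at every point of its execution, every nonzero entry of every column $j$ of $R$ lies in a row $m$ with $g(c_m) < g(c_j)$.

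This invariant is maintained by induction along the outer loop. It holds for $R=\partial$ by the property above. When column $j$ is processed, the only operation performed is adding some already-finalized column $k < j$ to column $j$; by the inductive hypothesis every nonzero entry of $R_k$ lies in a row $m$ with $g(c_m) < g(c_k)$, and since the matrix is sorted $g(c_k) \le g(c_j)$, hence $g(c_m) < g(c_j)$, and together with the hypothesis for the old column $j$ the new column $j$ still satisfies the bound. Granting the invariant, take any cell $c_j$ of $\mathbb{M}^{\infty}(\mathcal{K})$. If column $j$ of $R$ is zero then $c_j$ creates a class; it is either never killed, giving an infinite (hence nonzero-length) interval, or killed by a later cell, and in either case $c_j$ is the left endpoint of a nonzero-length interval. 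If column $j$ is nonzero then, with $r=low(j)$, the pair $(c_r,c_j)$ is a persistence pair and the invariant applied to column $j$ forces $g(c_r) < g(c_j)$, so $c_j$ terminates the nonzero-length interval $[g(c_r),g(c_j))$. Every cell falls into exactly one of these cases, which is the optimality statement.

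The main obstacle is the careful bookkeeping in the invariant argument rather than any conceptual difficulty: one must be sure that when column $k$ is added to column $j$ it is already in its final reduced form (which follows from the left-to-right processing order of Algorithm~\ref{alg:reduction}), that $low$ refers to the lowest nonzero entry as the invariant is being applied, and that all filtration comparisons are taken with respect to the sorted order of the matrix rather than the pairing order. The entire conceptual content is carried by the single observation that in $\mathbb{M}^{\infty}(\mathcal{K})$ a cell strictly dominates its faces in filtration value — everything else is a routine induction over the reduction algorithm together with a citation of~\cite{mischaikow}.
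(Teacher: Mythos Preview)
Your proof is correct. For \emph{correctness} you cite Mischaikow--Nanda and chain the equalities, whereas the paper reproves that single-step result via the Persistence Equivalence Theorem (Theorem~\ref{thm:pet}), exhibiting the vertical isomorphisms and checking that the squares commute because V-paths in a filtration-compatible matching cannot ascend filtration levels; the content is the same, only the level of self-containedness differs.

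For \emph{optimality} you take a genuinely different and cleaner route. The paper argues by contradiction inside an induction on the column index: assuming a zero-persistence pair appears at column $k{+}1$, it distinguishes whether column $A$ was reduced without collisions (then $(A,b)$ could have been matched, contradicting vitality) or with collisions (then the first colliding column $A_1$ and its lowest-one $b_1$ satisfy $g(A)=g(b)\le g(b_1)\le g(A_1)\le g(A)$, producing an earlier zero-persistence pair and contradicting the inductive hypothesis). Your argument replaces this case analysis by a single, directly maintainable invariant---every nonzero entry of every column lies in a row of strictly smaller filtration value---which is stronger than what is needed and makes the induction step a one-line check. What the paper's argument buys is that it never needs to track entries other than lowest-ones; what yours buys is that no contradiction or case split is required and the conclusion for each cell is read off immediately from the invariant.
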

\begin{proof}

\emph{Correctness}:
To show that the algorithm is correct we will apply the Persistence Equivalence Theorem for each iteration:

\[
\begin{CD}
  \ldots @>j_{l-1}>> H_{*}(\mathbb{M}^{i}_{l}) @>j_{l}>> H_{*}(\mathbb{M}^{i}_{l+1}) @>j_{l+1}>> \ldots \\
  @VVV@VV  bd^{\mathbb{M}}_{l} V@VV bd^{\mathbb{M}}_{l+1} V @VVV\\
  \ldots @>k_{l-1}>> H_{*}(\mathbb{M}^{i+1}_{l}) @>k_{l}>> H_{*}(\mathbb{M}^{i+1}_{l+1}) @>k_{l+1}>> \ldots\\
\end{CD}
\]

We need to show two things:
\begin{enumerate}
\item That the vertical maps are isomorphisms on homology level. 
\item That all squares commute.
\end{enumerate}

First note that the vertical maps send each chain in the input complex to a corresponding chain of the Morse complex. This is equivalent to computing Morse boundaries, as described in Section~\ref{sec:dmt}. We remind that to find a corresponding Morse chain we follow appropriate V-paths in the Morse graph. 

1) Vertical arrows are isomorphisms: this is a consequence of Theorem 2.1 from \cite{kozlov}, which states that the upper chain complex $M$ is decomposed by the Morse construction into an \emph{acyclic} part and the Morse complex having homology isomorphic with $M$.

2) To prove that the squares commute for each $i, l$, let us take a chain $c \in \mathbb{M}^i_l = \sum c_j$. We show that $(bd^{\mathbb{M}}_{l+1} \circ  j_l) (c) = (k_l \circ bd^{\mathbb{M}}_{l}) (c)$.

Down and right ($k_l \circ bd^{\mathbb{M}}_{l}$): If $c_j$ is critical, it is unchanged by the vertical map. Otherwise, we follow the paths of the Morse graph to compute the corresponding chain in the Morse complex. Repeating this computation for every $c_j$, the value of $bd^{\mathbb{M}}_{l}$ on $c$ is obtained.  Moving right with inclusion, the chain remains the same.

Right and down ($bd^{\mathbb{M}}_{l+1} \circ j_l$): first the chain $c$ is inserted by inclusion into level $l+1$ of filtration, so it is unchanged. But now we move with the vertical arrow, which might be richer on this level, as additional paths enter the Morse graph. Note that since we force the paths to be non-increasing with filtration, $bd^{\mathbb{M}}_{l+1}$ restricted to level $l$ is the same as $bd^{\mathbb{M}}_{l}$. In other words, any V-path starting at $c_j$ at level at most $l$ can only reach cells of lower or equal filtration values. In particular, it will never reach any critical cell introduced at level $l+1$. 

Therefore the two images of chain $c$ are the same and the diagram commutes.

We can apply Persistence Equivalence Theorem and finish the proof that persistent homology is unchanged during our \emph{iterated} Morse complex construction.

\emph{Optimality}:

We want to show that the simplified complex, $\mathbb{M}^{\infty}(\mathcal{K})$ contains only significant information, that is \emph{no intervals of persistence zero} are present. The argument is based on the analysis of the behavior of the standard (left-to-right) matrix-reduction algorithm (Algorithm~\ref{alg:reduction}) run on the boundary matrix of the final \emph{iterated} Morse complex $\mathbb{M}^{\infty}(\mathcal{K})$. The lowest-ones of the reduced matrix directly indicate persistence intervals. Zero columns indicate that a given cell creates an infinite interval.

The argument is inductive with respect to the iteration of the outer loop in the Algorithm~\ref{alg:matRed}. Specifically, we consider the first $k$ reduced columns of the matrix. For $k=0$ this submatrix is empty, so there are no zero-persistence pairs.

Let us assume that the argument holds for some $k \geq 0$. 

Suppose by contradiction that there is a zero-length persistence interval generated by a pair $(A,b)$, where $A$ is the $k+1$ column. It means that at this stage we have the following situation (dots mark arbitrary entries):

The matrix after $k+1$ iterations (first $k+1$ columns are reduced).
\[
\begin{array}{ccccc}
&      & &k+1& \\
 &     & &A& \\
 &     &.&.&.\\
 &     & &.& \\
b&0 ...&0&1& \\
 &     & &0& \\
 &     &.&0&.\\
 &     & &0& \\
\end{array}
\]

The matrix after $k$ iterations (first $k$ columns are reduced).

\[
\begin{array}{ccccc}
 &       & &k+1& \\
 &    A1 & &A0& \\
 &    . &.&.&.\\
 &    . & &.& \\
b&.    .&...&.& \\
 &    . & &0& \\
 &    . &.&.&.\\
b1&   1 & &1& \\
\end{array}
\]

There are two possibilities:
\begin{enumerate}
\item During the reduction of $A0$ there were no collisions, so $A0=A$ is a cell in the complex $\mathbb{M}^{\infty}(\mathcal{K})$. Then, since $g(A) = g(b)$ and $\kappa(A,b) \neq 0$,  it was possible to make a Morse matching between $A$ and $b$, which gives a contradiction with the fact that all possible Morse matchings compatible with filtration were made in $\mathbb{M}^{\infty}(\mathcal{K})$.
\item $A$ is represented as a sum of the preceding columns, which generated collisions. In this case $A = A0 + A1+ \ldots An$, for $A0$ being the column in the unreduced matrix and $A1,\ldots,An$ being columns preceding $A0$ in the matrix. 

For the proof we need to find the lowest nonzero position of all the columns $A_i$, $i \in \{1,\ldots,n\}$. During the process of reducing a single column the lowest-ones can never increase, therefore the first collision yields the lowest-one we search for. We call the column $A1$ and this lowest position $b1$ and note that $g(b) \leq g(b1)$. Also note that $b1$ marks the lowest one in the reduced column $A1$ and the original column $A0$, so $g(b1) \leq g(A1)$ and $g(b1) \leq g(A0) = g(A)$.

From the assumption we have $g(A) = g(b)$. From the filtration of the complex we have $g(A1) \leq g(A)$ and $g(b) \leq g(b1)$. Putting this together we get:
$g(A) = g(b) \leq g(b1) \leq g(A1) \leq g(A)$, consequently $g(b1) = g(A1)$. This means that there was a zero-persistence pair within the first $k$ columns. This contradicts the inductive assumption.
\end{enumerate}
\end{proof}

We have the following theorem which is a direct consequence of Theorem~\ref{thm:optimalMorse}:
\begin{thm}
\label{thm:lastPhaseIsShort}
Let $\mathcal{K}$ be the initial filtered chain complex. Let $p$ be the number of finite and $k$ the number of infinite persistence intervals of $\mathcal{K}$. Then $\card{\mathbb{M}^{\infty}(\mathcal{K})} = 2p+k$.
\end{thm}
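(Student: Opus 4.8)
The plan is to derive Theorem~\ref{thm:lastPhaseIsShort} as an immediate counting consequence of the correctness and optimality statements already established in Theorem~\ref{thm:optimalMorse}. First I would invoke \emph{Correctness}: since $\mathcal{K}$ and $\mathbb{M}^\infty(\mathcal{K})$ have the same persistence, they have the same multiset of persistence intervals, hence the same number $p$ of finite intervals and the same number $k$ of infinite intervals. So it suffices to show that the number of cells of $\mathbb{M}^\infty(\mathcal{K})$ equals $2p+k$.

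Next I would set up the standard accounting of cells against intervals via the reduced boundary matrix of $\mathbb{M}^\infty(\mathcal{K})$ (Algorithm~\ref{alg:reduction}). Run the matrix-reduction algorithm on the sorted boundary matrix of $\mathbb{M}^\infty(\mathcal{K})$. As recalled in Section~\ref{sec:background}, after reduction every cell $c_i$ falls into exactly one of three categories: (i) its column is zero and it is not the lowest-one of any column — then $c_i$ creates an infinite interval; (ii) its column is nonzero with $low(i)=j$ — then $c_i$ (a $(p{+}1)$-cell) kills the class created by $c_j$, so the pair $(c_j,c_i)$ accounts for one interval of finite length; (iii) its column is zero but $c_i=c_j=low(\ell)$ for some $\ell$ — then $c_i$ is the birth cell of a finite interch. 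Thus cells come in birth/death pairs for finite intervals and as singletons for infinite intervals: the total cell count is (number of finite intervals)$\times 2$ + (number of infinite intervals), i.e. $2p'+k'$ where $p'$, $k'$ are the finite/infinite interval counts \emph{of $\mathbb{M}^\infty(\mathcal{K})$}.

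Then I would apply \emph{Optimality}: every element $A\in\mathbb{M}^\infty(\mathcal{K})$ starts or terminates a \emph{nonzero}-length persistence interval. Nonzero-length means the interval does not disappear — it is a genuine interval recorded in the persistence diagram — so no cell is "wasted" on a zero-length (instantaneously dying) interval. Equivalently, every pair $(c_j,c_i)$ produced by the reduction has $g(c_j) < g(c_i)$, and therefore each such pair contributes exactly one of the $p$ finite intervals of $\mathcal{K}$, and conversely each finite interval is realized by exactly one such pair; similarly each zero column that is not a lowest-one contributes exactly one of the $k$ infinite intervals. Combining with the count from the previous paragraph, $\card{\mathbb{M}^\infty(\mathcal{K})} = 2p+k$, using that $p'=p$ and $k'=k$ by Correctness.

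The only subtle point — the step I would treat most carefully — is making the bijection between cells of $\mathbb{M}^\infty(\mathcal{K})$ and interval-endpoints airtight, i.e.\ ruling out that a cell could be a birth cell whose corresponding death never occurs while also not being counted as an infinite-interval generator, or that the reduction could pair two cells of equal filtration value. Both are excluded precisely by the \emph{Optimality} clause (no zero-length intervals), which in turn rests on the key structural property of $\mathbb{M}^\infty(\mathcal{K})$ noted before Theorem~\ref{thm:optimalMorse}: for every $A$ and every $b$ in its boundary, $g(A) > g(b)$. Once that is spelled out, the result is a pure bookkeeping identity: cells $=$ (finite intervals counted with multiplicity two) $+$ (infinite intervals counted once) $= 2p+k$.
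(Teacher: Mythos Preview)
Your proposal is correct and follows exactly the approach the paper intends: the paper states Theorem~\ref{thm:lastPhaseIsShort} as ``a direct consequence of Theorem~\ref{thm:optimalMorse}'' without further detail, and your argument spells out precisely that consequence --- combining \emph{Correctness} (same $p$ and $k$) with \emph{Optimality} (no zero-length pairs) and the standard birth/death bookkeeping from the reduced matrix. There is nothing to add beyond noting the small typo ``interch'' for ``interval''.
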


This theorem indicates that the complex $\mathbb{M}^{\infty}(\mathcal{K})$ is the minimal complex encoding the persistence of the initial complex. We can now apply the matrix reduction method, to get persistence intervals in time $O((2p+k)^3)$. Therefore if the number of persistence intervals is small, this computation can be efficient. As an alternative, we propose a new algorithm presented in Section~\ref{sec:persViaMorse}, which relies only on graph operations. 

Algorithm~\ref{alg:simplification} describes the simplification procedure. To compute persistence based on simplified complex $C$ use Algorithm~\ref{alg:reduction}. We want to point out that there is no obvious way of relaxing the condition $g(A) = g(M(A))$ for matched elements. See the Appendix for more details.

\begin{algorithm}[!ht]
  \small
  \caption{Simplification for persistence computations} 
  \label{alg:simplification}
  \begin{algorithmic}[1]
  \REQUIRE Initial filtered complex $C$ of dimension $d$
  \ENSURE Simplified complex $C$, having the same persistent homology
    \WHILE{true}        
            \STATE M := Build Morse complex of C using only matchings compatible with filtration
            \IF{M = C}
				\STATE break
            \ENDIF 
			C := M	
    \ENDWHILE
    %\STATE (To compute persistence, use Algorithm~\ref{alg:reduction} on the boundary operator of $C$.)    
  \end{algorithmic}     
\end{algorithm}

At the end of this section, in Figure~\ref{fig:iteratedCmplxPers}  we present an example of the \emph{iterated} Morse complex construction. 
\begin{figure}[!h]
\centerline{\includegraphics[scale=0.5]{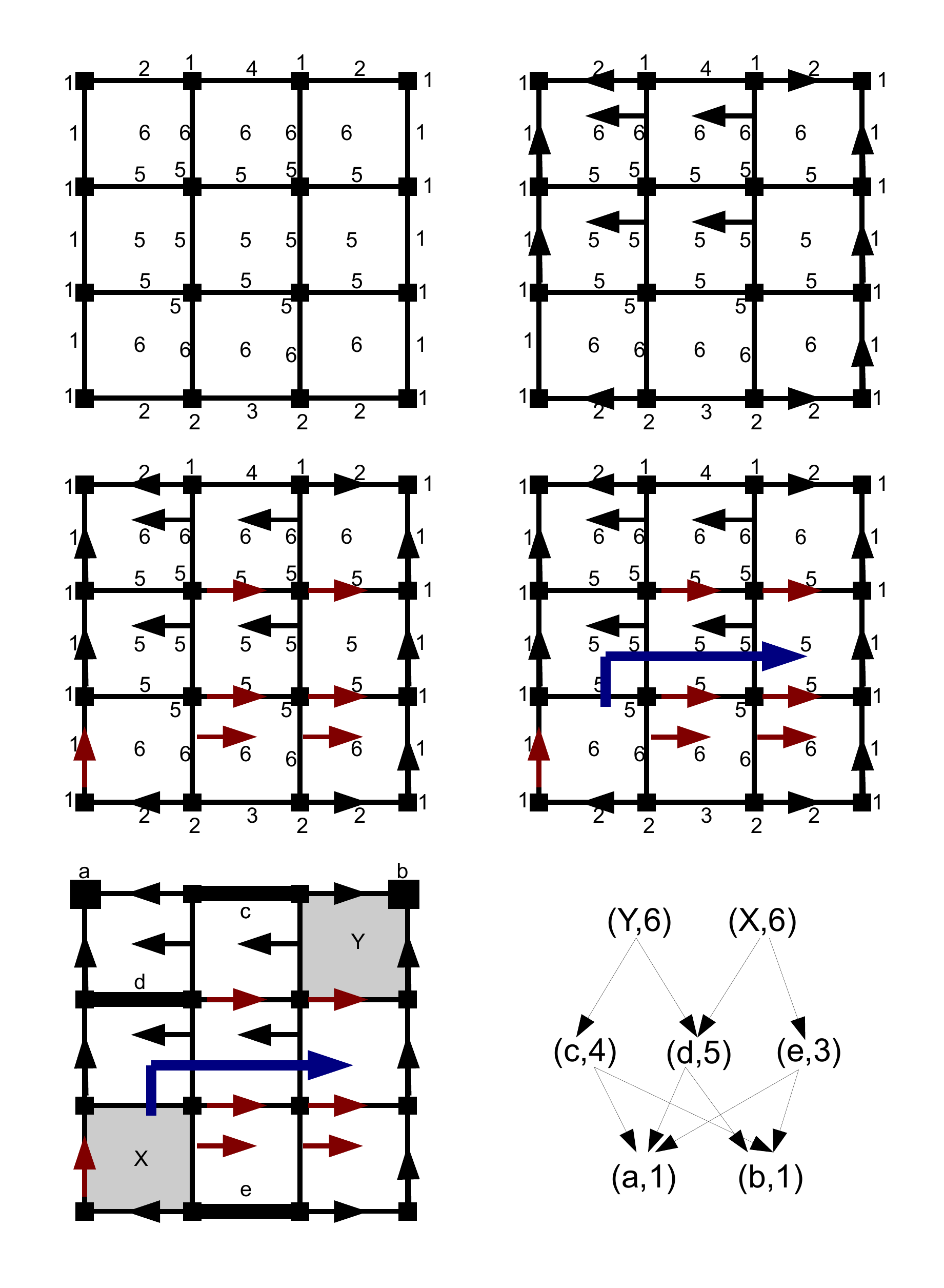}}
\caption{On the top left the initial filtered complex $\mathcal{K}$ is depicted. On the top right, the first iteration, $\mathbb{M}^{1}(\mathcal{K})$ of the Morse complex compatible with filtration construction. On the middle left with red arrows the second iteration $\mathbb{M}^{2}(\mathcal{K})$, and on the middle right the third and final iteration of $\mathbb{M}^{\infty}(\mathcal{K})$ construction. On the bottom left, the cells of $\mathbb{M}^{\infty}(\mathcal{K})$ are named, and on bottom right the boundary relation is presented in a form of diagram. The levels indicate the gradation -- vertices $a$, $b$ at the bottom, edges $c,d,e$ in the middle and faces $X$ and $Y$ at the top. The numbers indicate filtration values of elements and arrows -- boundary relation.}
\label{fig:iteratedCmplxPers}
\end{figure}

\section{Persistence intervals via iterated Morse approach.}
\label{sec:persViaMorse}

In this section we compute persistence intervals using the \emph{iterated} Morse complex approach. It will be shown that the presented approach can be interpreted as a variation of matrix-reduction algorithm. It is however based on graph theory, rather than matrix algebra. %We want to stress that this new approach enables one to compute homology and persistent homology by using only graph theoretical operations.

We stress that in this section, unlike previous one, we allow to make Morse matchings between elements having different level of filtration. Therefore, we will construct Morse matchings that are \emph{not} compatible with filtration in a sense that element $A$ can be matched with $b$ if $g(A) \geq g(b)$.

%Let $\mathbb{M}^{\infty}_{0}(\mathcal{K}) \subseteq \mathbb{M}^{\infty}_{1}(\mathcal{K})  \subseteq \ldots \subseteq \mathbb{M}^{\infty}_{\infty}(\mathcal{K})  = \mathbb{M}^{\infty}(\mathcal{K}) $ be subcomplexes of $\mathbb{M}^{\infty}(\mathcal{K})$ 
Algorithm~\ref{alg:persViaMorse} is used to compute persistence intervals of $\mathbb{M}^{\infty}(\mathcal{K})$.
\begin{algorithm}[!ht]
  \small
  \caption{Compute persistent homology via Iterated Morse complex.} 
  \label{alg:persViaMorse}
  \begin{algorithmic}[1]
  \REQUIRE Filtered iterated Morse complex $C = \mathbb{M}^{\infty}(\mathcal{K})$.
  \ENSURE Persistence intervals of $\mathbb{M}^{\infty}(\mathcal{K})$.
  \STATE $S :=$ empty multiset of persistence intervals;
  \STATE int $f :=$ second filtration level of $C$;
  \STATE int $l :=$ last filtration level of $C$;
  \STATE $M$ is a vital strategy to make Morse matchings. Matchings between elements of different filtrations are allowed in $M$. If element $A$ can be matched with two (or more) elements $b_1$, $b_2...$, we match it with the one with maximal filtration value.
  \FOR { int $i := f$ to $l$}
		\WHILE {true}%{Iterated Morse complex on $C_i$ is not obtained}
			\STATE Construct Morse matching on $C_{i}$ using $M$ (remark: for every $(A,b)$ matched by $M$ we have $g(b) < g(A) = i$.). 
			\IF {Nothing was matched by $M$}
				\STATE break
			\ENDIF
			\FOR {Every $(A,b)$ matched by $M$} 
					\STATE $S := S \cup [g(b),g(A)]$;
			\ENDFOR
			\STATE $C := $ Morse complex on $C$ constructed based on Morse matching $M$;
		\ENDWHILE
  \ENDFOR
  \FOR { every cell $c$ in complex $C$}
		\STATE $S := S \cup [g(c),\infty]$;
  \ENDFOR
  \RETURN $S$;
  \end{algorithmic}
  \label{alg:persistenceViaMorse}
\end{algorithm}

We want to point out that the Morse boundary procedure is always performed on the whole complex $\mathbb{M}^{\infty}(\mathcal{K})$ even if the matchings are made on a proper subcomplex $\mathbb{M}^{\infty}_i(\mathcal{K})$. We start from the complex $\mathbb{M}^{\infty}_{f}(\mathcal{K})$. Since $\mathbb{M}^{\infty}(\mathcal{K})$ is an iterated Morse complex, is clear that in $\mathbb{M}^{\infty}_{f-1}(\mathcal{K})$ and $\mathbb{M}^{\infty}_{f}(\mathcal{K}) \setminus \mathbb{M}^{\infty}_{f-1}(\mathcal{K})$ there are no Morse matchings to be made. But in $\mathbb{M}^{\infty}_{f}(\mathcal{K})$ there can be a Morse matching $(A,b)$ such that $A \in \mathbb{M}^{\infty}_{f}(\mathcal{K}) \setminus \mathbb{M}^{\infty}_{f-1}(\mathcal{K})$ and $b \in \mathbb{M}^{\infty}_{f-1}(\mathcal{K})$. This means that $b \in \mathbb{M}^{\infty}_{f-1}(\mathcal{K})$ is a homology generator in $\mathbb{M}^{\infty}_{f-1}(\mathcal{K})$  which is killed in $\mathbb{M}^{\infty}_{f}(\mathcal{K})$ (since the matching $(A,b)$ can be made without changing homology of $\mathbb{M}^{\infty}_{f}(\mathcal{K})$). Making such a matching indicates a persistence interval generated by the pair $(A,b)$, since the homology class generated by $b$ is killed by $A$. We assume that all possible matchings in $\mathbb{M}^{\infty}_{f}(\mathcal{K})$ are made (by iterating Morse complex construction) before proceeding to $\mathbb{M}^{\infty}_{f+1}(\mathcal{K})$.

When processing complex $\mathbb{M}^{\infty}_{i}(\mathcal{K})$ we assume that in $\mathbb{M}^{\infty}_{i-1}(\mathcal{K})$ no more Morse matchings can be made. We are searching for matchings $(A,b)$ such that $A \in \mathbb{M}^{\infty}_{i}(\mathcal{K}) \setminus \mathbb{M}^{\infty}_{i-1}(\mathcal{K})$ and $b \in \mathbb{M}^{\infty}_{i-1}(\mathcal{K})$. If two or more elements $b_1,\ldots,b_n$ can be matched with $A$ we always choose $b_j$ having maximal value of filtration. Morse matching $(A,b)$ indicates an interval generated by $(A,b)$, since $b$ generates nontrivial homology class in the level of $g(b)$, which becomes trivial or identical to some other class born earlier in the level of $g(A)$. 

When all the possible matchings are made, in the last for loop the unmatched elements are found. They generate infinite persistence intervals.

It is clear that the levels of filtration need to be processed in order. See the Appendix for more details.

Let us now show that the presented technique can be interpreted in therms of the standard algebraic Algorithm~\ref{alg:matRed}.
\begin{thm}
\label{th:persViaMorse}
Let $\mathcal{K}$ be a filtered chain complex. Let $\mathbb{M}^{\infty}(\mathcal{K})$ be the \emph{iterated} Morse complex described in Section~\ref{sec:IteraterMorseComplex}. Then the persistence intervals of $\mathcal{K}$ and the intervals obtained from $\mathbb{M}^{\infty}(\mathcal{K})$ by the described algorithm are the same.
\end{thm}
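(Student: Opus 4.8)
The plan is to show that Algorithm~\ref{alg:persViaMorse} is just a disguised, level-by-level execution of the matrix-reduction Algorithm~\ref{alg:matRed} run on the (sorted) boundary matrix of $\mathbb{M}^{\infty}(\mathcal{K})$, together with an appeal to Theorem~\ref{thm:optimalMorse} (Correctness) to transport the result back to $\mathcal{K}$. Since $\mathbb{M}^{\infty}(\mathcal{K})$ has the same persistence as $\mathcal{K}$, it suffices to prove that the multiset $S$ returned by the algorithm equals the multiset of persistence intervals read off from the reduced boundary matrix of $\mathbb{M}^{\infty}(\mathcal{K})$ via lowest-ones.

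First I would set up the correspondence between a single Morse matching $(A,b)$ made by $M$ and a single column operation in Algorithm~\ref{alg:matRed}. When $M$ matches $A$ with $b$ (where $g(b) < g(A) = i$, and $b$ is chosen with maximal filtration among the candidates), the Morse-complex construction replaces the boundary matrix by the matrix in which the row/column of $A$ and $b$ are eliminated and the remaining incidences are updated by exactly the Gaussian-type elimination that Kozlov's algebraic Morse theory prescribes — i.e. adding (mod 2) the column of $A$ to every other column having $b$ in its boundary, and symmetrically for rows. I would argue that choosing $b$ with maximal filtration value is precisely the rule that makes $b$ play the role of $low(A)$: since the matrix is sorted by filtration, the largest-filtration face of $A$ is the lowest one in column $A$. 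Thus each matching $(A,b)$ records the pair $(low(A)=b, A)$, exactly as the reduction algorithm would when column $A$ becomes reduced with a fresh lowest one; and the subsequent Morse-boundary recomputation on the whole complex mirrors the effect of adding column $A$ into all later columns that shared that lowest one (the ``collisions'' removed downstream). Iterating Morse-complex construction within a fixed level $i$ until nothing more can be matched corresponds to fully reducing all columns of cells with filtration value $i$ before moving on — which is legitimate because, as in the Optimality argument of Theorem~\ref{thm:optimalMorse}, V-paths (hence column additions) are non-increasing in filtration, so reducing level $i$ never disturbs levels $<i$ and is never disturbed by levels $>i$. This is why the outer loop may safely process $f,\dots,l$ in order.

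Next I would make the bookkeeping precise by an induction on the filtration level $i$. The inductive hypothesis is that after the algorithm finishes level $i$, the intervals added to $S$ so far are exactly the finite intervals $[g(c_k),g(c_j)]$ with $g(c_j)\le i$ coming from lowest-ones $low(j)=k$ in the reduced matrix, and that the still-unmatched cells of filtration $\le i$ are exactly the cells that give zero columns (infinite intervals) in the reduction restricted to those levels. For the inductive step, when level $i$ is processed: every cell $A$ of filtration $i$ is either eventually matched — contributing $[g(b),i]$ with $b=low(A)$ as argued above — or remains critical, meaning column $A$ reduces to zero, i.e. $A$ starts an interval that is infinite among levels $\le i$. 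That no zero-length interval is ever produced (equivalently $g(b)<g(A)$ always) is guaranteed by the property established just before Theorem~\ref{thm:optimalMorse}: in $\mathbb{M}^{\infty}(\mathcal{K})$ every face $b$ of every $A$ satisfies $g(b)<g(A)$, and this property is preserved under the intra-level iteration since we only match across strictly lower filtration. Finally, after the outer loop ends, the remaining cells of $C$ are those never matched at any level; by the induction these are exactly the zero columns of the fully reduced matrix, and the last for-loop assigns them $[g(c),\infty]$, matching the standard read-off. Combining with Theorem~\ref{thm:optimalMorse} (Correctness) to identify the persistence of $\mathbb{M}^{\infty}(\mathcal{K})$ with that of $\mathcal{K}$ completes the argument.

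The main obstacle I anticipate is making rigorous the claim that the Morse-boundary recomputation on the \emph{whole} complex after a batch of same-level matchings reproduces \emph{exactly} the column-addition bookkeeping of Algorithm~\ref{alg:matRed}, including the case where several cells of level $i$ share a lowest one and the reduction has to propagate collisions among them. The delicate point is that Algorithm~\ref{alg:persViaMorse} may match a whole batch simultaneously rather than strictly left-to-right, so I would need a lemma saying that any maximal batch of compatible same-level Morse matchings, followed by Morse-boundary recomputation, yields a matrix whose lowest-ones (on those columns) coincide with those produced by the sequential reduction — essentially a confluence/order-independence statement for mod-2 Gaussian elimination with the ``lowest pivot'' rule. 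Once that lemma is in hand, the rest is the routine induction sketched above.
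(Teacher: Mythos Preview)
Your proposal is correct and follows essentially the same route as the paper: reduce to $\mathbb{M}^{\infty}(\mathcal{K})$ via Theorem~\ref{thm:optimalMorse}, then identify a single Morse matching $(A,b)$ (with $b$ of maximal filtration, hence $b=low(A)$) together with the ensuing Morse-boundary recomputation with the column addition $A_i:=A_i+A$ for every coface $A_i$ of $b$, i.e.\ with resolving all collisions at row $b$ in Algorithm~\ref{alg:matRed}. The only structural difference is that the paper inducts matching-by-matching and disposes of your batching concern by the factorisation $\mathbb{M}_{(A_1,b_1),\ldots,(A_n,b_n)}=\mathbb{M}_{(A_1,b_1)}\circ\cdots\circ\mathbb{M}_{(A_n,b_n)}$ (stated in a footnote and left to the reader), whereas you induct level-by-level and isolate the same point as an explicit confluence lemma; both lead to the same argument.
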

\begin{proof}
From Theorem~\ref{thm:optimalMorse} it is clear that the persistence intervals of $\mathcal{K}$ and $\mathbb{M}^{\infty}(\mathcal{K})$ are the same. Let $(A,b)$ be the first Morse matching made by the presented algorithm (i.e. there does not exist a possible matching $(A',b')$ such that $g(A') < g(A)$ and there does not exist $b'$, a face of $A$ with $g(b') > g(b)$).

Let $\mathbb{M}_{(A,b)}(\mathbb{M}^{\infty}(\mathcal{K}))$ be the Morse complex obtained from $\mathbb{M}^{\infty}(\mathcal{K})$ by making a Morse matching $(A,b)$. 
To prove the theorem it suffices to show that the multiset of persistence intervals of $\mathbb{M}^{\infty}(\mathcal{K})$ minus the interval $[g(A),g(b)]$ is equal to the multiset of persistence intervals of $\mathbb{M}_{(A,b)}(\mathbb{M}^{\infty}(\mathcal{K}))$\footnote{This  follows from the fact that $\mathbb{M}_{(A_i,b_i),...,(A_n,b_n)} = \mathbb{M}_{(A_i,b_i)} \circ  ... \circ \mathbb{M}_{(A_n,b_n)}$ . Easy proof is left for the reader.} 

First let us remind how the procedure to compute Morse boundary works and how it is interpreted in matrix-reduction algorithm. The details can be found in~\cite{forman}. Let us assume just one Morse matching, $(A,b)$, is made. And also that $b$ is in the boundary of $A,A_1,\ldots,A_n$. Then boundaries of $A_1,\ldots,A_n$ need to be changed in the following way: $\partial A_i = \partial A_i \setminus b \cup \partial A \setminus b$ i.e. $b$ is replaced in boundary of $A_i$ with boundary of $A$ excluding $b$, as in Figure~\ref{fig:morseBoundary}.

\begin{figure}[!h]
\centerline{\includegraphics[scale=0.5]{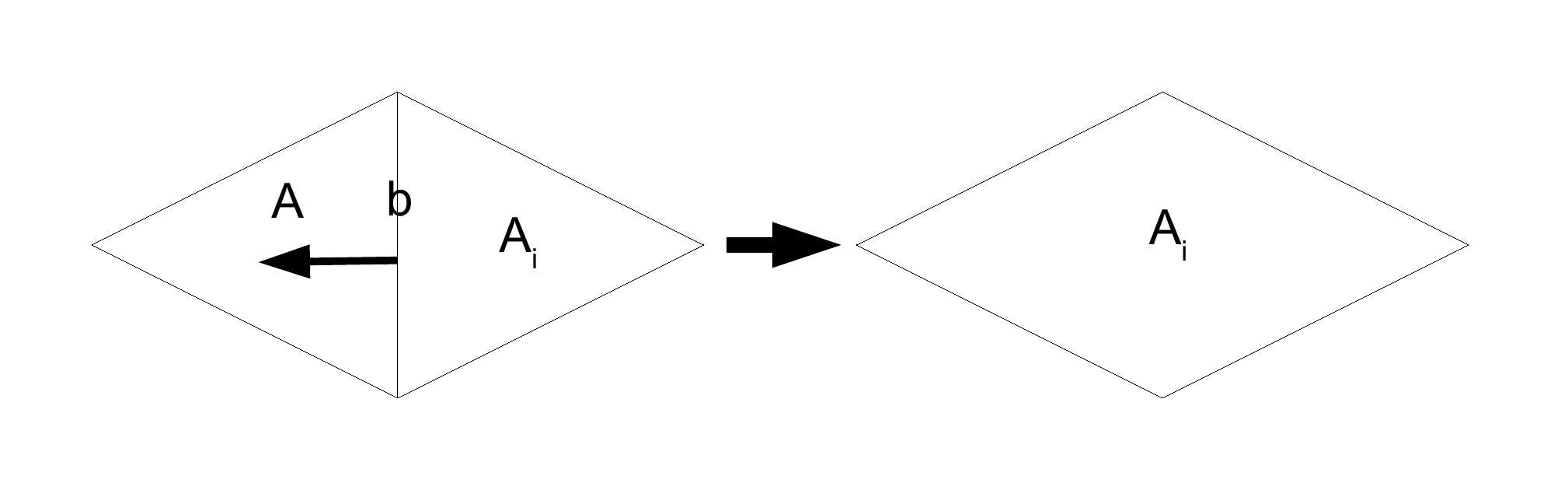}}
\caption{Illustration how performing a single pairing changes the complex.}
\label{fig:morseBoundary}
\end{figure}

Now, suppose Algorithm~\ref{alg:matRed} is run on the complex $\mathbb{M}^{\infty}(\mathcal{K})$. Without loss of generality we may assume that the column $A$ is the first nonzero column in the matrix. Since there cannot be a collision there, Algorithm~\ref{alg:matRed} leaves the column $A$ unchanged and the interval $[g(b),g(A)]$ is obtained in dimension of $b$ -- as in the case of the algorithm presented in this section. But, row $b$ may cause some collisions later in the course of execution of Algorithm~\ref{alg:matRed}. Suppose the first collision in Algorithm~\ref{alg:matRed} occurs:
\[
\begin{array}{cccccc}
 & &  &A&  &A_i\\
 & &  &.&  &.\\
 & &  &.&  &.\\
b&.&..&1&  &1\\
 & &  &0&  &0\\
 & &  &.&  &.\\
 & &  &0&  &0\\
\end{array}
\]
To remove this collision, Algorithm~\ref{alg:matRed} performs the addition $A_i := A_i + A$. In the algorithm presented in this section, when processing cell $A$ the matching $(A,b)$ was made and the Morse boundaries were computed. As one can see, the computations of Morse boundary after the matching $(A,b)$ is simply 	equivalent to summing $A_i := A_i + A$ for all $A_i$ having $b$ in boundary. Therefore all the future collisions caused by $b$ are resolved. Consequently making a Morse matching is equivalent to resolving all the future collisions at once. This simple observation proves the theorem.
\end{proof}

In Figure~\ref{fig:morseToTheEnd} an illustration of the presented procedure on the complex $\mathbb{M}^{\infty}(\mathcal{K})$ from Figure~\ref{fig:iteratedCmplxPers} is given.

\begin{figure}[!h]
\centerline{\includegraphics[scale=0.4]{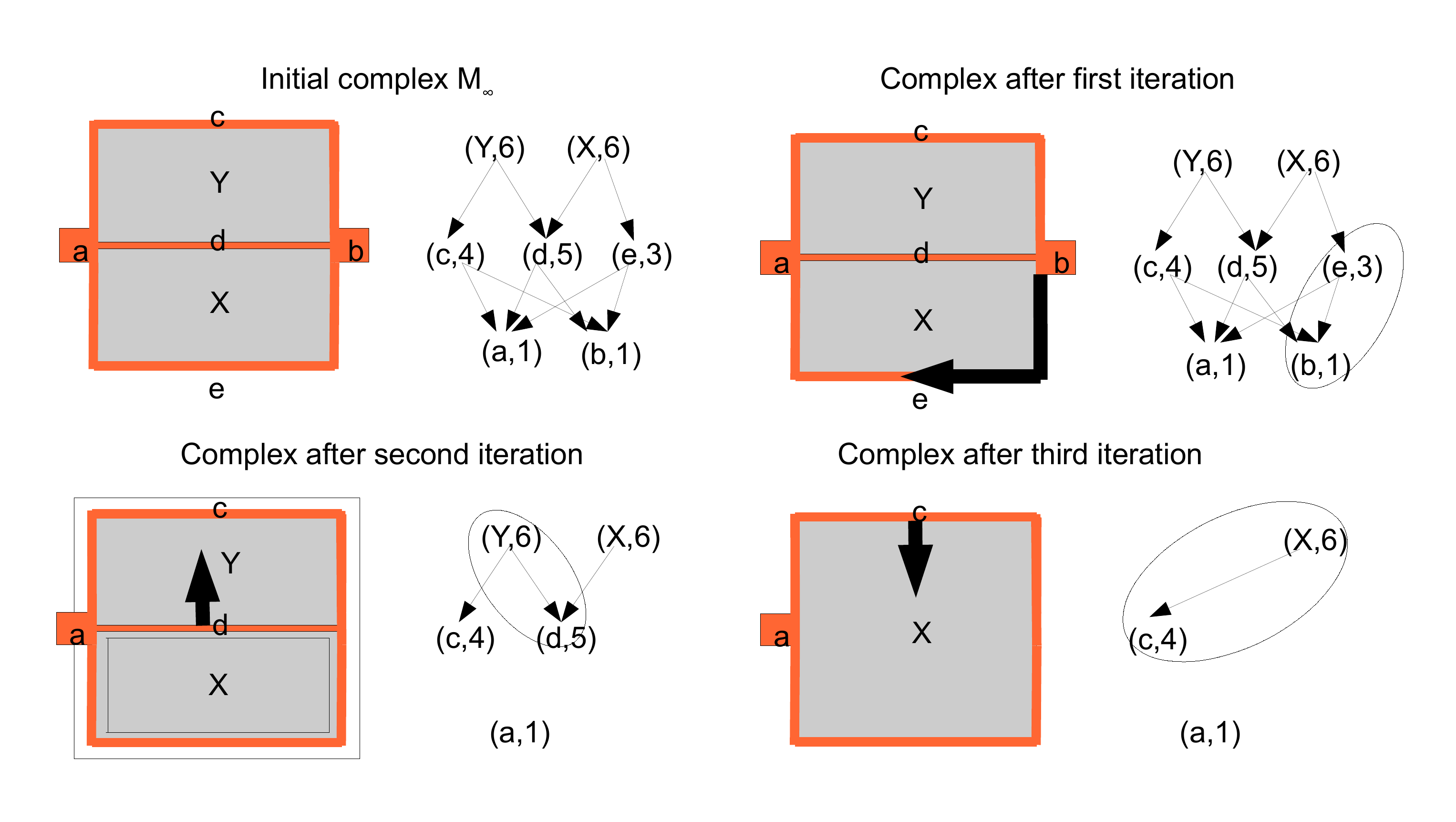}}
\caption{First complex on the top left -- complex $\mathbb{M}^{\infty}(\mathcal{K})$ from Figure~\ref{fig:iteratedCmplxPers}. 
The complex on the top right -- on filtration value $3$ the first possible Morse matching between $b$ and $e$ appears (indicated on the left with arrow, and with ellipse on the right) is made. When the matching is constructed, the persistence interval $[1,3]$ in dimension zero is reported. 
Third complex on the bottom left is obtained as a result. There, on the level $6$ a matching between $Y$ and $d$ can be made (we want to point out that there is also possible matching between $Y$ and $c$, however filtration value of $d$ is higher than filtration value of $c$). After the matching, the persistent interval $[5,6]$ in dimension one is reported.
 The final complex on the bottom right-- the last possible matching between $X$ and $c$ is made. After the matching, the persistent interval $[4,6]$ in dimension $1$ is reported. The remaining cell in the complex is the vertex $a$ it correspond to infinite persistence interval $[1,\infty]$ in dimension zero.}
\label{fig:morseToTheEnd}
\end{figure} 

We want to point out that this approach promises to parallelize well. The details will be presented in a more technical paper~\cite{dist}.

%We want also to point out that the Theorem~\ref{thm:optimalMorse} shows two phases of persistence (co)homology computations. In the first one, before obtaining complex $\mathbb{M}_{\infty}(\mathcal{K})$ the matchings between elements of different levels can be made independently. In the distributed setting no synchronization enforced by the fact that some filtration levels should be processed before others is required. On the contrary, in the phase of processing complex $\mathbb{M}^{\infty}(\mathcal{K})$ to obtain persistence intervals require synchronization with respect to filtration values, since the levels of filtration need to be processed sequentially in increasing order. Due to Theorem~\ref{thm:lastPhaseIsShort} the last phase is in fact as short as possible. In our opinion Theorem~\ref{thm:lastPhaseIsShort} in fact shows that doing Morse matchings as presented in Section~\ref{sec:dmt} provides the longest phase in persistent homology computation that do not require synchronization with respect to filtration. We hope that this property will be strongly used in new approaches to parallelize and distribute persistent (co)homology computations. The detailed version of the distributed algorithm to compute persistent (co)homology will be presented in~\cite{dist}.

Moreover, the approach will be as scalable as the implementations of the underlying graph algorithms. Therefore, using the available, mature libraries, we hope to achieve good practical performance. 

We also want to point out that Algorithm~\ref{alg:persistenceViaMorse} can be used with minor modification for initial filtered complex $\mathcal{K}$. Easy changes are left for the reader.

% Ja bym to na razie wywali, ciezko to krotko skomentowac tak zeby nie zamieszac...
% We would like to indicate that computation of $\mathbb{M}^{\infty}(\mathcal{K})$ is not a necessary preprocessing step of the algorithm presented in this section. In fact, we can work directly on the input filtered complex $\mathcal{K}$. The algorithm for that case is sketched in below:
% \begin{enumerate}
% \item Morse boundary procedure is always done on the whole complex $\mathcal{K}$ even if the matchings are made on a proper subcomplex of $\mathcal{K}$.

% \item We start from the complex $\mathcal{K}_0$ (being the complex on the lower filtration level) and construct an \emph{iterated} Morse complex on it. We are allowed to move to the next filtration level if all the matches in this filtration are already made. 

% \item Process level i-th level of filtration by constructing an \emph{iterated} Morse complex with elements from $\mathcal{K}_i$. Again, every matching between elements of different filtration values should be reported. Move to the next level once there are no more matches to be done in level $i$.

% \item When all the possible matches in all levels are made, one should look for unmatched elements. They generate infinite persistence intervals.
% \end{enumerate}
% It is clear that the presented approach works. Easy proof similar to the one of Theorem~\ref{th:persViaMorse} is left for the reader.

\section{Conclusions}
\label{sec:conclusions}

In our opinion the presented technique has several advantages, comparing to the standard way of computing homology and persistence:
\begin{enumerate}
\item It is combinatorial, does not require any algebraic matrix operation. %The only algebraic part of this procedure is hidden in the algorithm to compute boundaries in Morse complex. And this part is easy for field coefficients, especially ones from $\mathbb{Z}_2$.
\item It is based on graph theory -- we can use efficient algorithm (exact and approximate) and their existing implementations -- in particular libraries for distributed graph operations~\cite{graph_lib}.
\item It is intuitive -- it is easy to visualize the process of homology computations. 
%\item It is easy, possible to explain to undergraduate students.
\end{enumerate}

We are aware of some drawbacks and complications of our method:
\begin{enumerate}
\item There may exist bad cases, where the complexity will be unsatisfactory. 
\item This technique might not be suitable for cubical data. Existing methods~\cite{gunther,wagner11} rely on the compact representation of cubical grids. In our case the complex need not be a cubical complex after the first iteration, preventing us from storing it efficiently. 
\end{enumerate}

The presented techniques can be used to formalize and generalize homology-preserving properties of graph pyramids used in image recognition~\cite{peltier}. Moreover they can be beneficial in verified homology computation~\cite{heras} by avoiding matrix operations which are costly to verify automatically.

Summarizing, in this paper a novel technique of homology and persistent homology computations has been presented. It indicates that problem of computing (persistent) homology can be solved by iteratively applying standard graph algorithms. We hope that this approach will  lead to a scalable implementation for (persistent) homology computations.

\section*{Acknowledgments}
The authors would like to thank Herbert Edelsbrunner and Urlich Bauer for their valuable comments and suggestions. Both authors are supported by Google Research Awards program. We thank Marian Mrozek for the supervision of this project. PD. is partially supported by grant IP 2010 046370. HW. is partially supported by Foundation for Polish Science IPP Programme "Geometry and Topology in Physical Models".

\section{Appendix}
\label{sec:appendix}

\subsection{Example of matrix reduction computations}
A simple example of persistence intervals computations with the Algorithm~\ref{alg:matRed} are presented in Figure~\ref{fig:persistenceAlgorithm}. On the left, the initial complex. We assume that the filtration value for every vertex is $0$. The filtration value of edges are given in the picture. On the upper left, the initial boundary matrix. As one can see, the only collision is between columns $cd$ and $bd$. Therefore we have column $cd = cd + bd$ on the upper right. Then a collision between $cd$ and $ac$ appears and we set $cd = cd + ac$ on the lower left. There again we have a collision $cd$ with $ab$ which is removed in the lower right by setting $cd = cd + ab$. On the matrix in lower right there are no more collisions, therefore we can read persistence intervals out of it. Lowest one in column $ac$ indicates that edge $ac$ kills connected component created by $c$, which gives an interval $[0,1]$ in dimension $0$. Analogously lowest one in column $bd$ induces an interval $[0,1]$ in dimension $0$. Lowest one in column $ab$ indicates that edge $ab$ kills a connected component created in $b$, which gives an interval $[0,2]$ in dimension $0$. Zero column $cd$ induces an infinite interval $[3,\infty]$ in dimension one. 
\begin{figure}[!h]
\centerline{\includegraphics[scale=0.5]{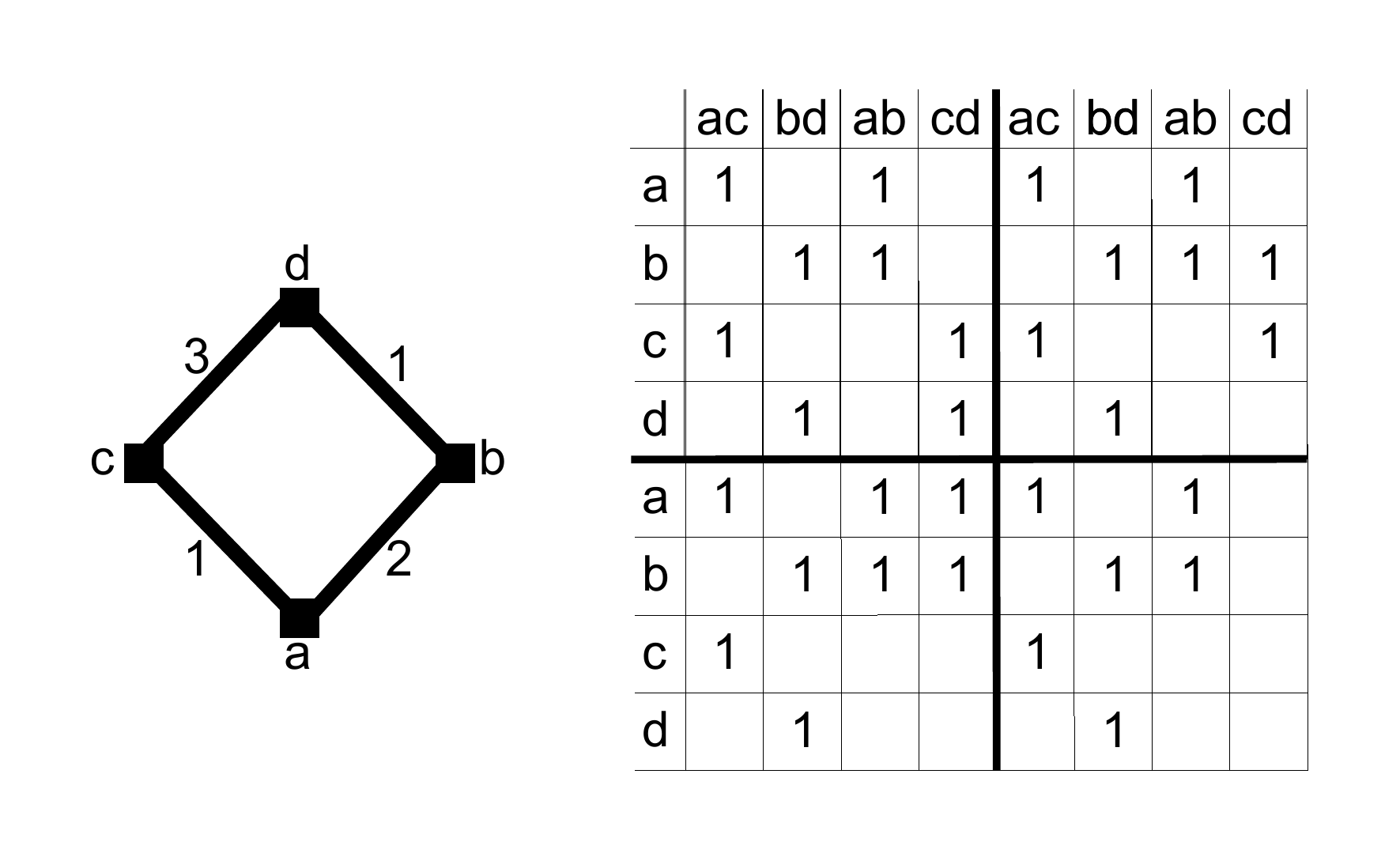}}
\caption{Example of matrix reduction computations}
\label{fig:persistenceAlgorithm}
\end{figure}

\subsection{Relaxing the tolerance}
In Figure~\ref{fig:epsilonTolerancePairing} we show that if one makes a Morse matchings between elements $(A,M(A))$ such that $|g(A)-g(M(A))| \leq \epsilon$, one may get arbitrarily large differences in the output persistence intervals. 
\begin{figure}[!h]
\centerline{\includegraphics[scale=0.5]{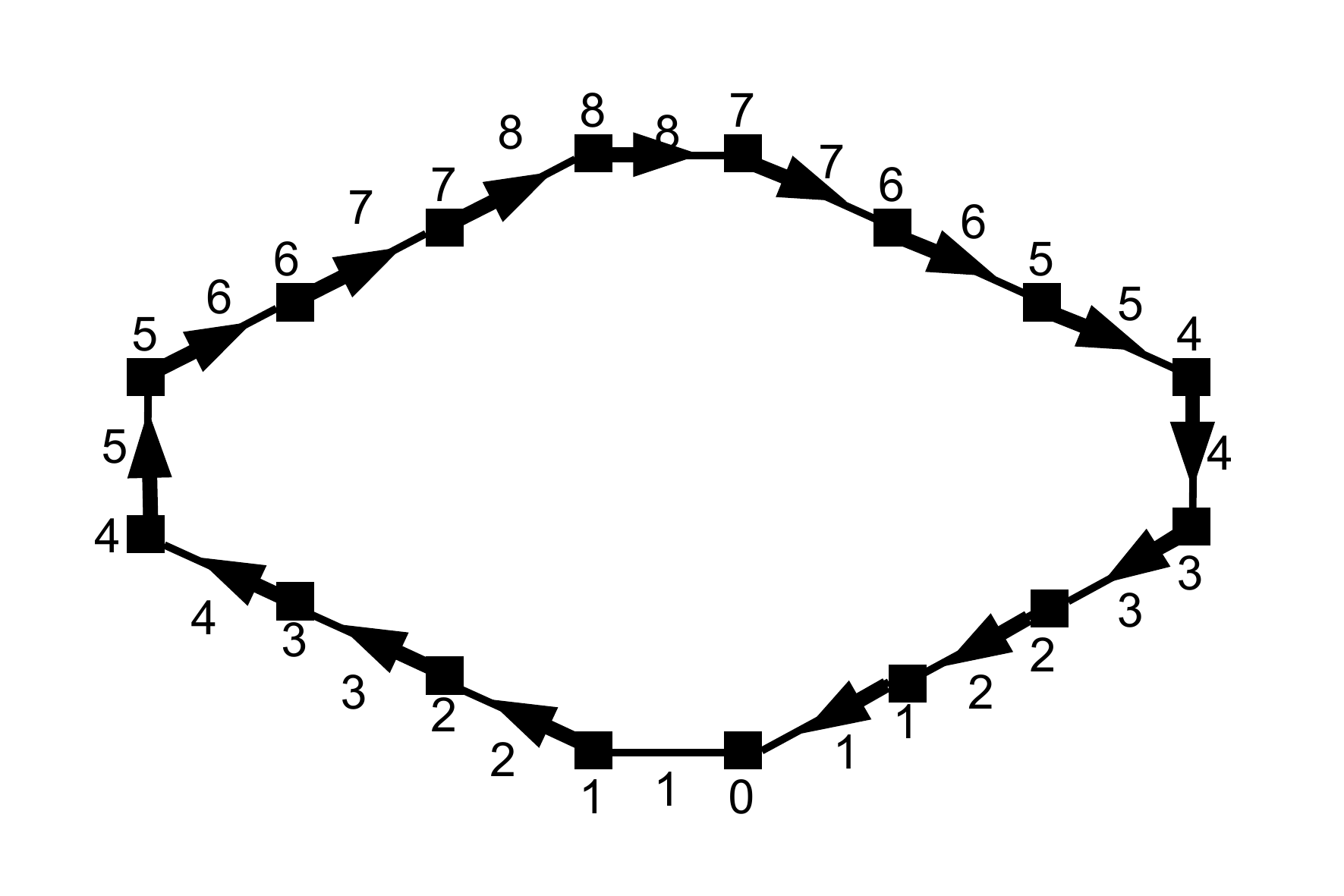}}
\caption{Let us assume the matchings are made with tolerance $\epsilon = 1$ (i.e. one is allowed to do Morse matchings between elements having the absolute value of difference of filtration values not greater than $\epsilon$). In the original complex in dimension $1$ we have a persistence interval $[8,\infty]$, while in the Morse complex (matchings are indicated with arrows) we have a persistence interval $[1,\infty]$. It is clear that by enlarging the circle, one may get arbitrary large difference in persistence intervals.}
\label{fig:epsilonTolerancePairing}
\end{figure}

\subsection{Processing order}
In Figure~\ref{fig:morseForIntervalsOrderNeeded} it is shown what happens if we do not proceed in Algorithm~\ref{alg:persistenceViaMorse} in the filtration order. 
\begin{figure}[!h]
\centerline{\includegraphics[scale=0.5]{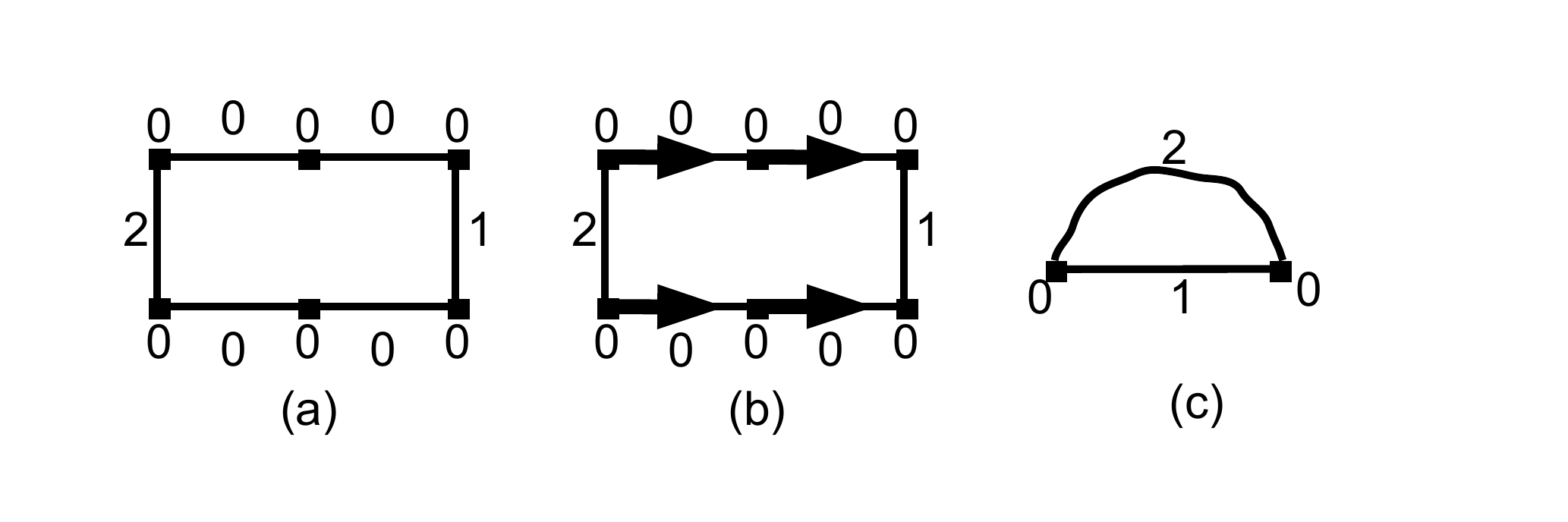}}
\caption{(a) The initial complex $\mathcal{K}$. Numbers indicate filtration values of elements. (b) Directed path on $\mathcal{K}$ compatible with filtration. (c) The reduced complex $\mathbb{M}^{\infty}(\mathcal{K})$. Suppose $\mathbb{M}^{\infty}_2(\mathcal{K}) \subseteq \mathbb{M}^{\infty}(\mathcal{K}) $ is considered first in the described procedure. Then a Morse matching can be made between one dimensional cell having filtration value $2$ and one of vertices of filtration value $0$ (and an interval $[0,2]$ in dimension $0$ is reported). Consequently no more matching can be made at any level of filtration and an interval $[0,\infty]$ in dimension $0$ and $[1,\infty]$ in dimension $1$ are reported. This is wrong, since the true output is $[0,1]$, $[0,\infty]$ in dimension $0$ and $[2,\infty]$ in dimension $1$. Therefore indeed the complexes have to be processed in the order of filtration values.}
\label{fig:morseForIntervalsOrderNeeded}
\end{figure}


\begin{thebibliography}{9}

\bibitem{ayala} R. Ayala, D. Fernández-Ternero, J. A. Vilches, \emph{Perfect discrete Morse functions on 2-complexes}, Pattern Recognition Letters - PRL, DOI: 10.1016/j.patrec.2011.08.011.

\bibitem{wadzewski} U. Bauer, C. Lange, M. Wardetzky, \emph{Optimal topological simplification of discrete functions on surfaces}, Discrete \& Computational Geometry 47:2 (2012), 347–377.

\bibitem{capd} The CAPD library, \url{capd.ii.uj.edu.pl}

\bibitem{chen2011output} C. Chen, M. Kerber, \emph{An output-sensitive algorithm for persistent homology}, 27th Annual Symposium on Computational Geometry (SoCG 2011).

\bibitem{chao} C. Chen, M. Kerber, \emph{Persistent homology computation with a twist}, 27th European Workshop on Computational Geometry (EuroCG 2011), 2011.

\bibitem{chomp} Chomp library, \url{chomp.rutgers.edu}

\bibitem{de} C. J. A. Delfinado, H. Edelsbrunner, \emph{An incremental algorithm for Betti numbers of simplicial complexes}, Proceeding
SCG '93 Proceedings of the ninth annual symposium on Computational geometry. pp. 232-239.

\bibitem{dionizous} Dionysus library \url{http://www.mrzv.org/software/dionysus/}

\bibitem{dist} P. D\l otko, M. Robinson, \emph{Distributed homology computations by local Morse pairings}, in preparation.

\bibitem{herbert} H. Edelsbrunner, J. Harer, \emph{Computational Topology. An Introduction}. Amer. Math. Soc., Providence, Rhode Island, 2010.

\bibitem{firstPers} H. Edelsbrunner, D. Letscher and A. Zomorodian. Topological persistence and simplification. Discrete Comput. Geom. 28 (2002), 511-533.

\bibitem{even} G. Even, J. Noar, B. Schieber and M. Sudan, \emph{Approximating minimum feedback sets and multicuts in directed graphs}, Algorithmica, 20, (1998) 151-174.

\bibitem{forman} R. Forman, \emph{Morse theory for cell complexes}, Advances in Mathematics, 134:90--145, 1998.

\bibitem{wlosi1} P. Frosini, \emph{A distance for similarity classes of submanifolds of a Euclidean space}, Bulletin of the Australian Mathematical Society, 42, 3 (1990), 407-416.

\bibitem{rocio} R. González-Díaz, M. J. Jimenez, B. Medrano, P. Real \emph{A tool for integer homology computation: lambda-AT-model}, Image Vision Comput. 27(7): 837-845 (2009).


%\bibitem{hubert} D. G\"{u}nther, J. Reininghaus, H. Wagner, I. Hotz, \emph{Memory Efficient Computation of Persistent Homology for 3D Image Data using Discrete Morse Theory}, Proc. Conference on Graphics, Patterns and Images, 24. (SIBGRAPI), to appear.

\bibitem{gunther} D. G\"{u}nther, J. Reininghaus, H. Wagner, I. Hotz, \emph{Effcient Computation of 3D Morse-Smale Complexes and Persistent Homology using Discrete Morse Theory}, {The Visual Computer 28(10)}, 959-969 (2012).

\bibitem{pascucci} A. Gyulassy, V. Natarajan, V. Pascucci, B. Hamann, \emph{Efficient Computation of Morse-Smale Complexes for Three-dimensional Scalar Functions},
 IEEE Transactions on Visualization and Computer Graphics, Vol. 13 , Issue: 6, pp. 1440	- 1447, 2007.
 
 \bibitem{cticKM} S. Harker, K. Mischaikow, M. Mrozek, V. Nanda, H. Wagner, M. Juda, P. Dlotko, \emph{The Efficiency of a Homology Algorithm based on Discrete Morse Theory and Coreductions}, 3rd International Workshop on Computational Topology in Image Context, November 2010, Chipiona, Spain Proceedings (Rocío González Díaz Pedro Real Jurado (Eds.)) ISSN: 1885-4508. 
 
\bibitem{harker} S. Harker, K. Mischaikow, M. Mrozek, V. Nanda, \emph{Discrete Morse Theoretic Algorithms for Computing Homology of Complexes and Maps},  submitted to Foundations of Computational Mathematics.

\bibitem{heras} J. Heras, M. D$\acute{e}$n$\grave{e}$s, G. Mata, A. M$\ddot{o}$rtberg, M. Poza, V. Siles, \emph{Towards a certified computation of homology groups for digital images}, Lecture Notes in Computer Science Volume 7309, 2012, pp 49-57.

\bibitem{hersh} P. Hersh, \emph{On optimizing discrete Morse functions}, (English summary) Adv. in Appl. Math. 35 (2005), no. 3, 294–322. 

\bibitem{joswig} M. Joswig , M. E. Pfetsch , \emph{Computing optimal Morse matchings}, SIAM Journal on Discrete Mathematics, Vol. 20 Issue 1, pp. 11 -- 25, 2006.

\bibitem{jplex} Jplex library \url{http://comptop.stanford.edu/u/programs/jplex/}

\bibitem{kenzo} Kenzo program \url{http://www-fourier.ujf-grenoble.fr/~sergerar/Kenzo/}

\bibitem{kozlov} D. Kozlov, \emph{Combinatorial Algebraic Topology}, Springer-Verlag, 2007.

\bibitem{lewinerHomology} T. Lewiner, H. Lopes, G. Tavares, \emph{Optimal discrete Morse functions for 2-manifolds}, Computational Geometry: Theory and Applications 26(3): pp. 221-233.

\bibitem{graph_lib} {G.Malewicz, M. Austern, A. Bik, J. Dehnert, I. Horn, N. Leiser, G. Czajkowski}, \emph{Pregel: a system for large-scale graph processing}, {Proceedings of the 2010 ACM SIGMOD International Conference on Management of data}, pp. 135-146, 2010.

\bibitem{milosavljevic2011zigzag} N. Milosavljevic, D. Morozov, P. Skraba, \emph{Zigzag Persistent Homology in Matrix Multiplication Time}, Proceedings of the 27th Annual Symposium on Computational Geometry (SCG'11).

\bibitem{mischaikow} K. Mischaikow, V. Nanda, \emph{Morse theory for filtrations and efficient computation of persistent homology}, Discrete \& Computational Geometry submitted.

\bibitem{munkres} J. R. Munkres, \emph{Elements of Algebraic Topology}, Addison-Wesley, Reading, MA, 1984.

\bibitem{peltier} S. Peltier, A. Ion, Y. Haxhimusa, W. G. Kropatsch, G. Damiand, \emph{Computing Homology Group Generators of Images Using Irregular Graph Pyramids}, GbRPR 2007: 283-294.

\bibitem{perseus} Perseus library, \url{http://www.math.rutgers.edu/~vidit/perseus.html}

\bibitem{vanessaPhd} V. Robins, \emph{Towards computing homology from finite approximations}, Topology Proceedings, 24:503-532, (1999).

\bibitem{vanessa} V. Robins, P.J. Wood, A.P. Sheppard, \emph{Theory and algorithms for constructing discrete Morse complexes from grayscale digital images}, 
IEEE Transactions on pattern analysis and machine intelligence, (2010) accepted.

\bibitem{sensors} V. de Silva, R. Ghrist, \emph{Coverage in sensor networks via persistent homology}, Algebraic and Geometric Topology, 2007.

\bibitem{storjohann} A. Storjohann, \emph{Near Optimal Algorithms for Computing Smith Normal Form of Integer Matrices}, Proceedings of the 1996 international symposium on symbolic and algebraic computation, ISAAC 1996, (1996), 267-274.

\bibitem{wagner11} H. Wagner, C. Chen, E. Vu\c{c}ini, \emph{Efficient computation of persistent homology for cubical data}, Proceedings of the 4th Workshop on Topology-based Methods in Data Analysis and Visualization (TopoInVis 2011).

\bibitem{ctic} H. Wagner, P. D\l otko, M. Mrozek, \emph{Computational topology in text mining}, in M. Ferri et al. (Eds.): CTIC 2012, LNCS 7309, pp. 117–127, 2012.

\bibitem{wlosi} A. Verri, C. Uras, P. Frosini, M. Ferri, \emph{On the use of size functions for shape analysis}, Biological Cybernetics, 70, (1993), 99-107. 
\end{thebibliography}
\end{document}